\numberwithin{equation}{section}
\newtheorem{theorem}{Theorem}[section]
\newtheorem{definition}[theorem]{Definition}
\newtheorem{lemma}[theorem]{Lemma}
\newtheorem{proposition}[theorem]{Proposition}
\newtheorem{notation}[theorem]{Notation}
\begin{document}

\title[Hive Algebras and Tensor Product Algebras]
{On the structures of hive algebras and tensor product algebras 
for general linear groups of low rank}

\author{Donggyun Kim}
\email{kim.donggyun@gmail.com}
\address{Department of Mathematics, Korea University, 
145 Anam-ro Seongbuk-gu, Seoul 02841, South Korea}

\author{Sangjib Kim}
\email[Corresponding author]{sk23@korea.ac.kr}
\address{Department of Mathematics, Korea University, 
145 Anam-ro Seongbuk-gu, Seoul 02841, South Korea}

\author{Euisung Park}
\email{euisungpark@korea.ac.kr}
\address{Department of Mathematics, Korea University, 
145 Anam-ro Seongbuk-gu, Seoul 02841, South Korea}

\begin{abstract}
The tensor product algebra $\mathrm{TA}(n)$ 
for the complex general linear group $\mathrm{GL}(n)$,
introduced by Howe et al., describes 
the decomposition of tensor products of irreducible polynomial 
representations of $\mathrm{GL}(n)$. Using the hive model for 
the Littlewood-Richardson coefficients, we provide a finite presentation of 
the algebra $\mathrm{TA}(n)$ for $n=2, 3, 4$ 
in terms of generators and relations, thereby giving a description of 
highest weight vectors of irreducible representations in the tensor products. 
We also compute the generating function of certain 
sums of Littlewood-Richardson coefficients.
\end{abstract}

\subjclass[2010]{20G05, 13A50, 05E15}
\keywords{General linear group, Highest weight vector, 
Littlewood-Richardson coefficients, Tensor product decomposition, 
Tensor product algebra, Hive, Hilbert-Poincare series.}

\maketitle


\section{Introduction}


For the complex general linear group $\mathrm{GL}(n)=\mathrm{GL}_n(\mathbb{C})$, 
the group of $n\times n$ invertible matrices over the field $\mathbb{C}$ 
of complex numbers, we let $V^{\lambda}_n$ denote its polynomial irreducible 
representation labeled by a Young diagram $\lambda$.
The \emph{Littlewood-Richardson}(LR) \emph{coefficient} 
for $\mathrm{GL}(n)$ is the multiplicity $c_{\mu\nu}^{\lambda}$ of 
the irreducible representation $V^{\lambda}_n$ occurring 
in the decomposition of the tensor product of two irreducible 
representations $V_n^{\mu}$ and $V_n^{\nu}$
\begin{equation}\label{tensor-decomp}
V_n^{\mu} \otimes V_n^{\nu} \; = \; \bigoplus_{\lambda} 
      \left( V^{\lambda}_n \right)^{\oplus \, c^{\lambda}_{\mu \nu}}.
\end{equation}
Since the Schur polynomials in $n$ variables are the characters of 
polynomial irreducible representations of $\mathrm{GL}(n)$, the LR coefficients 
can be also defined from the decomposition of the product of two Schur polynomials
\[
s^{\mu}_{n} \, s^{\nu}_{n} \; = \; \sum_{\lambda} c^{\lambda}_{\mu\nu} \, s^{\lambda}_{n}.
\]  

The LR coefficients are usually described in terms of 
a family of combinatorial objects such as LR tableaux. 
For various combinatorial objects counting the LR coefficients, 
see, for example, \cite{DK16, PV05, Pu08, TY08, vL01}.
A very different approach was proposed by Howe et al. in 
\cite{HTW05, HJLTW, HL12}. Using classical invariant theory, they constructed 
a multi-graded algebra, which we call \textit{the $\mathrm{GL}(n)$ tensor product algebra} 
and denote by $\mathrm{TA}(n)$. The dimension of its $(\lambda, \mu, \nu)$-homogeneous 
component $\mathrm{TA}(n)^{\lambda}_{\mu\nu}$ is exactly the LR coefficient 
$c_{\mu \nu}^{\lambda}$, and for each space $\mathrm{TA}(n)^{\lambda}_{\mu\nu}$
they gave a description of $\mathbb{C}$-basis elements labeled 
by LR tableaux on $\lambda/\mu$ with content $\nu$ (see \cite[\S 2]{HTW05}).
From a representation theoretic point of view, this approach provides 
a lot more refined information 
than many combinatorial ones in that the space 
$\mathrm{TA}(n)^{\lambda}_{\mu\nu}$ consists of the highest weight vectors of the
isomorphic copies of $V^{\lambda}_{n}$ occurring in the decomposition 
of $V_n^{\mu} \otimes V_n^{\nu}$.
From the existence of a finite SAGBI basis, it is also shown 
 that the algebra $\mathrm{TA}(n)$ is a flat deformation of its initial algebra 
(see \cite[\S 2.4-2.5]{HJLTW}). 

\medskip

In this paper, for each  $n=2, 3, 4$, we give a finite presentation of 
the $\mathrm{GL}(n)$ tensor product algebra $\mathrm{TA}(n)$ in terms of generators and relations. 
This will provide a method, different from the one given in \cite{HTW05}, to construct 
highest weight vectors appearing in the decomposition of the tensor products  \eqref{tensor-decomp}.
An explicit example 
illustrating how to compute highest weight vectors corresponding to hives 
or LR tableaux is given in \S \ref{ex33}.
Also, by applying a technique in projective algebraic geometry to 
the hive model for the LR coefficients, we compute 
a closed form formula for the series
\[
\mathfrak{HP}_n(t) = \sum_{d \geq 0}  m_d \, t^d
\]
where $m_d$ is the sum of LR coefficients $c_{\mu \nu}^{\lambda}$ 
over the triples $(\lambda,\mu,\nu)$ of Young diagrams 
such that $d=|\lambda|=|\mu|+|\nu|$.

\medskip

We remark that for the special linear group $G=\mathrm{SL}_n(\mathbb{C})$ 
and its standard maximal unipotent subgroup $U$, Grosshans studied 
in \cite{Gr} the algebra of the invariants of $G$ acting on 
$G/U \times G/U \times G/U$ by left translation. In particular, for $n=2,3,4$, 
he gave the invariants and all relations among them, and showed how 
such results are related to the tensor product decomposition problem 
for the representations of $G$. See also \cite[\S 4]{HTW05} 
and references therein for the related works on the case $n=4$.

\bigskip


\section{Hive Algebra and its HP series}\label{hive-algebra}


In this section, we impose a monoid structure on the collection of 
hives for $\mathrm{GL}(n)$, investigate the structure of 
the associated monoid algebra, and then
compute its Hilbert-Poincare(HP) series. These results will be extended to 
the $\mathrm{GL}(n)$ tensor product algebras in \S \ref{tensor-algebra}.

\subsection{LR tableaux}

First let us recall that a LR tableau $T$ for $\mathrm{GL}(n)$ is 
a filling of skew Young diagram with $1,2,...,n$ satisfying 
the semistandard condition and the Yamanouchi word, or reverse lattice word, condition.
For example, the following tableaux 
\begin{equation*}
\young(\ \ \ \ \ 111111,\ \ \ 1222,\ 2333,244)
\end{equation*}
is a LR tableau for $\mathrm{GL}(4)$  on a skew Young 
diagram $(11,7,5,3)/(5,3,1,0)$ with content $(7, 5, 3, 2)$.
It is well known that for Young diagrams $\lambda$, $\mu$, 
and $\nu$ having at most $n$ rows, the number of LR tableaux on 
the skew Young diagram $\lambda/\mu$ with content $\nu$ is 
equal to the LR coefficient $c^{\lambda}_{\mu \nu}$ for $\mathrm{GL}(n)$. 
See, for example, \cite{Fu97, HL12, Ma95, St99, vL01}.

\subsection{Hives}

Now we recall the hive model for the LR coefficients 
introduced by Knutson and Tao \cite{KT99}.
A sequence $\kappa=(\kappa_1, \kappa_2, ..., \kappa_n) \in \mathbb{Z}^n$ 
is \textit{non-negative dominant}, if 
\[
k_1 \geq k_2 \geq \cdots \geq k_n \geq 0.
\]
We write $|\kappa|$ for the sum $\kappa_1 + \cdots + \kappa_n$. 
Note that we can identify a non-negative dominant sequence $\kappa$
with the Young diagram having $\kappa_i$ boxes in its $i$-th row  
counting from top to bottom.

A  \textit{triangular array} $h=(h_{ij})_{1 \leq j \leq i \leq n+1}$ is 
an array of integers  whose $i$-th row contains a subsequence $(h_{i1}, 
h_{i2}, ..., h_{ii})$ of length $i$ for $1 \leq i \leq n+1$. For example, 
if $n=4$ then $h=(h_{ij})$ can be drawn as in Figure \ref{triang-gl4}.
\begin{figure}[!ht]
\begin{tikzpicture}[scale=0.80] 
\node at (5,5) {$h_{11}$} ;
\node at (4,4) {$h_{21}$} ; \node at (6,4) {$h_{22}$} ;
\node at (3,3) {$h_{31}$} ; \node at (5,3) {$h_{32}$} ; \node at (7,3) {$h_{33}$} ; 
\node at (2,2) {$h_{41}$} ; \node at (4,2) {$h_{42}$} ; \node at (6,2) {$h_{43}$} ; \node at (8,2) {$h_{44}$} ;
\node at (1,1) {$h_{51}$} ; \node at (3,1) {$h_{52}$} ; 
\node at (5,1) {$h_{53}$} ; \node at (7,1) {$h_{54}$} ; \node at (9,1) {$h_{55}$} ; 
\end{tikzpicture}
\centering
\caption{The triangular array for $\mathrm{GL}(4)$}\label{triang-gl4}
\end{figure}

The \textit{boundary} of a triangular array $h$ is a triple
$(\underline{a}_h, \underline{b}_h,\underline{c}_h)$ where
\begin{align*} 
\underline{a}_h &=  (a_1, a_2, ..., a_n) \text{\ where \ }
a_i = h_{i+1,i+1}- h_{i,i}, \\
\underline{b}_h &= (b_1, b_2, ..., b_n) \text{\ where \ } b_i = h_{i+1,1}-h_{i,1}, \\
\underline{c}_h &=  (c_1, c_2, ..., c_n) \text{\ where \ } c_i = h_{n+1, i+1} - h_{n+1, i}.
\end{align*}
We say the boundary of $h$ is \textit{non-negative dominant}, if 
the sequences $\underline{a}_h$, $\underline{b}_h$, and $\underline{c}_h$ 
in the boundary of $h$ are non-negative dominant. 
The following inequalities for a triangular array  
$h=(h_{ij})_{1 \leq j \leq i \leq n+1}$ are called the \textit{rhombus conditions}:
\begin{itemize}
\item  $(h_{i,j}+h_{i+1,j+1}) \geq (h_{i+1,j}+h_{i,j+1})$ 
for $1 \leq j < i \leq n$,
\item  $(h_{i,j}+h_{i,j+1}) \geq (h_{i+1,j+1}+h_{i-1,j})$ 
for $1 \leq j < i \leq n$, 
\item $(h_{i+1,j}+h_{i,j}) \geq (h_{i+1,j+1} + h_{i,j-1})$ 
for $1 < j \leq i \leq n$.
\end{itemize}
In other words, 
for each fundamental rhombus in a triangular array,
the sum of entries at the obtuse corners is bigger than or equal 
to the sum of entries at the acute corners.

\begin{definition}\label{definition-hive}
A \textit{hive} for $\mathrm{GL}(n)$ is 
a triangular array $h=(h_{ij})_{1 \leq j \leq i \leq n+1}$ of  
integers such that
\begin{enumerate}
\item $h_{11}=0$;
\item the entries $h_{ij}$ satisfy all three types of rhombus conditions;
\item the boundary of $h$ is non-negative dominant.
\end{enumerate}
\end{definition}

For Young diagrams $\lambda, \mu, \nu$ with not more than $n$ rows, 
it is well known that the number of hives with boundary $(\lambda,\mu,\nu)$ 
is exactly the LR coefficient $c_{\mu \nu}^{\lambda}$ for $\mathrm{GL}(n)$. 
See, for example, \cite{Bu00, KT99, KTW}. 
Therefore, we can expect a one-to-one correspondence between 
the set of all LR tableaux on $\lambda/\mu$ with content $\nu$ and 
the set of all hives with boundary $(\lambda,\mu,\nu)$. 
The following is from \cite[\S 3]{KTT}. See also \cite{Bu00, PV05} and \cite[\S 4.5]{DK16}.
\begin{lemma}
\label{LR-Hive-bij}
The following map gives a one-to-one correspondence between 
the set of all LR tableaux $T$ for $\mathrm{GL}(n)$ on $\lambda/\mu$ with content $\nu$ and 
the set of all hives $h_T$ for $\mathrm{GL}(n)$  with boundary $(\lambda,\mu,\nu)$:
\[
T \mapsto h_T= (h_{ij})_{1 \leq j \leq i \leq n+1}
\]
where $h_{ij}$ is the number of empty boxes and boxes containing entries 
not more than $j-1$ in the first $i-1$ rows of $T$.
\end{lemma}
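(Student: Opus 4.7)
The plan is to introduce an auxiliary interpolating chain of Young diagrams
\[
\mu = \lambda^{(0)} \subset \lambda^{(1)} \subset \cdots \subset \lambda^{(n)} = \lambda,
\]
defined for a given LR tableau $T$ by letting $\lambda^{(j)}_i$ equal $\mu_i$ plus the number of boxes in row $i$ of $T$ with entry at most $j$. The statement's formula then reads $h_{ij} = \sum_{k=1}^{i-1} \lambda^{(j-1)}_k$, whose row differences $h_{i+1,j} - h_{i,j}$ recover $\lambda^{(j-1)}_i$. Under this dictionary each hive axiom will translate transparently into a combinatorial property of the chain, which is itself one of the defining properties of an LR tableau.

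To verify that $h_T$ is a hive with boundary $(\lambda, \mu, \nu)$, I would first note that $h_{11}=0$ is an empty sum, and then compute the boundary: $\underline{b}_h = \mu$ and $\underline{c}_h = \nu$ are immediate from the chain formulation, while $\underline{a}_h = \lambda$ uses the standard fact, proved by an easy induction on rows via the reverse lattice word condition, that row $k$ of an LR tableau has entries bounded by $k$, which forces $\lambda^{(i-1)}_k = \lambda_k$ for $k < i$ and hence $h_{i+1,i+1} - h_{i,i} = \lambda_i$. Non-negative dominance of the boundary then reduces to $\lambda, \mu, \nu$ being partitions. The three rhombus conditions translate as follows: (R1) becomes $\lambda^{(j)}_i \geq \lambda^{(j-1)}_i$, the nestedness of the chain; (R2) becomes $\lambda^{(j-1)}_{i-1} \geq \lambda^{(j)}_i$, the column-strict (semistandard) condition on $T$; and (R3) becomes the inequality that the number of $(j-1)$'s in rows $1,\ldots,i-1$ of $T$ is at least the number of $j$'s in rows $1,\ldots,i$, which is the reverse lattice word condition at the prefix where the reading word has just finished the $j$'s in row $i$.

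Injectivity is immediate because $h_T$ records, via the chain data, the multiplicity of each letter in each row of $T$, which together with the weakly-increasing row rule uniquely determines $T$; since the excerpt already states that both sets have cardinality $c^{\lambda}_{\mu\nu}$, a well-defined injection suffices. The main obstacle is the (R3) direction of the translation: one must confirm that the finite family of (R3) inequalities captures the full reverse lattice word condition at every prefix of the reading word, not just at the specially chosen prefixes noted above. This follows from the observation that within a single row the semistandard ordering forces all $j$'s strictly to the right of all $(j-1)$'s, so in the right-to-left, top-to-bottom reading the count of $j$'s can only overtake the count of $(j-1)$'s at prefixes ending immediately after the $j$'s in some row $i$, which are exactly the prefixes encoded by (R3).
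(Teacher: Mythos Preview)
The paper does not actually supply a proof of this lemma: it merely attributes the bijection to \cite[\S 3]{KTT} (see also \cite{Bu00, PV05, DK16}) and states the result. So there is no in-paper argument to compare your proposal against.

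That said, your argument is correct and is precisely the standard route taken in those references: encoding the LR tableau by the interpolating chain $\mu=\lambda^{(0)}\subset\cdots\subset\lambda^{(n)}=\lambda$, observing that the hive entries are the partial sums $h_{ij}=\sum_{k<i}\lambda^{(j-1)}_k$, and then matching the three rhombus families with (i) horizontal-strip nestedness, (ii) column-strictness, and (iii) the Yamanouchi condition at the critical prefixes. Your handling of the only genuinely delicate point---that the finitely many (R3) inequalities suffice for the full reverse-lattice-word condition because within each row the $j$'s are read before the $(j-1)$'s---is exactly right, and your shortcut of proving only well-definedness and injectivity, then invoking the equal cardinalities $c^{\lambda}_{\mu\nu}$ already asserted in the paper, is legitimate and efficient. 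In short: your proof is sound and is essentially the argument the cited sources give; the paper simply chose to quote rather than reproduce it.
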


We remark that if $|\lambda| \ne |\mu|+|\nu|$ then $c_{\mu \nu}^{\lambda} =0$; 
if $(h_{ij})_{1 \leq j \leq i \leq n+1}$ is a hive for $\mathrm{GL}(n)$ 
whose boundary is $(\lambda, \mu, \nu)$ then we have
\begin{equation} \label{Z-grading}
h_{n+1,n+1} = |\lambda|=|\mu|+|\nu|.
\end{equation}


\subsection{Hive algebra and HP series} 

Let us impose a monoid structure on the set of hives for $\mathrm{GL}(n)$.
Note that for two hives $h=(h_{ij})$ and $h'=(h'_{ij})$ for $\mathrm{GL}(n)$, 
as elements of the free abelian group $\mathbb{Z}^{(n+1)(n+2)/2}$, 
the entries of their sum
\[
h + h' =(h_{ij}+h'_{ij})_{1 \leq j \leq i \leq n+1}
\] 
also satisfy the rhombus conditions, and the boundary of their sum
\[
(\underline{a}_h + \underline{a}_{h'}, \underline{b}_h + \underline{b}_{h'}, 
\underline{c}_h + \underline{c}_{h'})
\]
is also non-negative dominant.

\begin{definition}[Hive cone and Hive algebra] 
\ 
\begin{enumerate}
\item 
The \textit{hive cone} $\mathcal{H}(n)$  for $\mathrm{GL}(n)$ is the submonoid 
of $\mathbb{Z}^{(n+1)(n+2)/2}$ consisting of all the hives for $\mathrm{GL}(n)$.

\item 
The \textit{hive algebra} $\mathrm{HA}(n)$ for $\mathrm{GL}(n)$ is 
the subalgebra of the polynomial algebra 
\[
\mathrm{HA}(n) \subset \mathbb{C}[z_{ij}: 1 \leq j \leq i \leq n+1]
\]
over $\mathbb{C}$ generated by the monomials
$\mathbf{z}^h=\prod_{i,j}z_{ij}^{h_{ij}}$ for all  $h=(h_{ij}) \in \mathcal{H}(n)$.
\end{enumerate}
\end{definition}

From $\mathbf{z}^h \cdot \mathbf{z}^{h'} = \mathbf{z}^{h+h'}$, it follows
that the hive algebra $\mathrm{HA}(n)$ is isomorphic to the monoid algebra of 
the hive cone $\mathcal{H}(n)$. 
With the subspace $\mathrm{HA}(n)_{\mu \nu}^{\lambda}$ of $\mathrm{HA}(n)$ 
spanned by the monomials $\mathbf{z}^h$ 
corresponding to all the hives $h$ with boundary $(\lambda,\mu,\nu)$, 
the hive algebra $\mathrm{HA}(n)$ is multi-graded by the triples 
$(\lambda,\mu,\nu)$ of non-negative dominant sequences. 

Using \eqref{Z-grading}, we can also consider the $\mathbb{Z}$-grading 
structure of the hive algebra $\mathrm{HA}(n)$  with respect to 
the degree of $z_{n+1,n+1}$. That is, 
\begin{equation} \label{degree-d}
\mathrm{HA}(n) 
= \bigoplus_{(\lambda,\mu,\nu)} \mathrm{HA}(n)_{\mu \nu}^{\lambda} 
= \bigoplus_{d \geq 0} \bigoplus_{\stackrel{(\lambda,\mu,\nu):}{|\lambda|=d}} 
  \mathrm{HA}(n)_{\mu \nu}^{\lambda}.
\end{equation}
With this $\mathbb{Z}$-grading, we can consider the HP 
series $\mathfrak{HP}_n (t)$ of the hive algebra 
$\mathrm{HA}(n)$
\[
\mathfrak{HP}_n(t) = \sum_{d \geq 0}  m_d \, t^d
\]
where $m_d$ is the dimension of the $d$-homogeneous space of $\mathrm{HA}(n)$,
or equivalently, the number of hives with boundary $(\lambda,\mu,\nu)$ such that 
$|\lambda| = |\mu| + |\nu| =d$.
Since the number of hives with boundary $(\lambda,\mu,\nu)$ is equal to 
the LR coefficient $c_{\mu\nu}^{\lambda}$ and 
if $|\lambda| \ne |\mu|+|\nu|$ then $c_{\mu \nu}^{\lambda} =0$, we have
 
\begin{proposition} \label{md-LRsum}
The coefficient $m_d$ of the HP series $\mathfrak{HP}_{n}(t)$  
of $\mathrm{HA}(n)$ is the sum 
\[
m_d =  \sum_{\stackrel{(\lambda,\mu,\nu):}{|\lambda|=d}} c_{\mu \nu}^{\lambda}
\]
of the LR coefficients $c_{\mu \nu}^{\lambda}$  for $\mathrm{GL}(n)$ 
over $(\lambda,\mu,\nu)$ with $d=|\lambda|=|\mu|+|\nu|$.
\end{proposition}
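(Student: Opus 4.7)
The plan is to view this proposition as the bookkeeping consequence of the three facts already in place: that $\mathrm{HA}(n)$ has the monomials $\mathbf{z}^h$ (for $h \in \mathcal{H}(n)$) as a vector-space basis, that $\dim \mathrm{HA}(n)_{\mu\nu}^{\lambda}$ equals the number of hives with boundary $(\lambda,\mu,\nu)$, and the Knutson--Tao identity stating that this count equals $c_{\mu\nu}^{\lambda}$. So the proof will really just assemble the $\mathbb{Z}$-grading \eqref{degree-d} and take dimensions.

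First I would observe that, because the $\mathbf{z}^h$ are distinct monomials in the polynomial ring $\mathbb{C}[z_{ij}]$, the assignment $h \mapsto \mathbf{z}^h$ embeds $\mathcal{H}(n)$ into $\mathrm{HA}(n)$ as a $\mathbb{C}$-basis. Restricting this basis to hives of fixed boundary $(\lambda,\mu,\nu)$ then gives a basis of $\mathrm{HA}(n)_{\mu\nu}^{\lambda}$, so by the hive count of LR coefficients (Lemma \ref{LR-Hive-bij} together with the cited results of \cite{Bu00, KT99, KTW}) we have
\[
\dim_{\mathbb{C}} \mathrm{HA}(n)_{\mu\nu}^{\lambda} \; = \; c_{\mu\nu}^{\lambda}.
\]

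Next I would unpack the $\mathbb{Z}$-grading. By \eqref{Z-grading}, for any hive $h$ with boundary $(\lambda,\mu,\nu)$ the apex entry $h_{n+1,n+1}$ equals $|\lambda| = |\mu|+|\nu|$, so $\mathbf{z}^h$ is homogeneous of degree $|\lambda|$ in $z_{n+1,n+1}$. Hence the $d$-homogeneous component in the grading of \eqref{degree-d} is exactly the direct sum of those $\mathrm{HA}(n)_{\mu\nu}^{\lambda}$ with $|\lambda|=d$; taking dimensions and using the above, I obtain
\[
m_d \; = \; \sum_{\stackrel{(\lambda,\mu,\nu):}{|\lambda|=d}} \dim \mathrm{HA}(n)_{\mu\nu}^{\lambda} \; = \; \sum_{\stackrel{(\lambda,\mu,\nu):}{|\lambda|=d}} c_{\mu\nu}^{\lambda}.
\]
Finally, the constraint $|\mu|+|\nu|=d$ in the proposition's indexing set is automatic: if $|\mu|+|\nu|\neq|\lambda|=d$ then $c_{\mu\nu}^{\lambda}=0$, so such triples contribute nothing and can be included or excluded without changing the sum.

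There is no genuine obstacle here; the content of the proposition has been front-loaded into the construction of $\mathrm{HA}(n)$ and the cited hive-model interpretation of $c_{\mu\nu}^{\lambda}$. The only point to be careful about is confirming that the $\mathbb{Z}$-grading by the degree of $z_{n+1,n+1}$ coincides with the coarsening of the $(\lambda,\mu,\nu)$-multi-grading by $|\lambda|$, which is precisely what \eqref{Z-grading} guarantees.
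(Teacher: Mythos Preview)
Your proposal is correct and follows essentially the same approach as the paper, which also derives the proposition directly from the facts that $\dim \mathrm{HA}(n)_{\mu\nu}^{\lambda}=c_{\mu\nu}^{\lambda}$ (via the hive count) and that $c_{\mu\nu}^{\lambda}=0$ when $|\lambda|\neq|\mu|+|\nu|$. The paper treats this as an immediate consequence and gives no further argument beyond what you have written.
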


\medskip

Now for each $n=2, 3, 4$, we give a finite presentation of 
the hive algebra  $\mathrm{HA}(n)$ and 
compute its HP series. 

\begin{theorem}[Hive algebra for $\mathrm{GL}(2)$] \label{hive-alg-gl2} \ 
\begin{enumerate}
\item The hive algebra $\mathrm{HA}(2)$ is isomorphic to the polynomial 
	  algebra in five indeterminates.
\[
\mathrm{HA}(2) \cong \mathbb{C}[x_1, x_2,..., x_5].
\]
\item The HP series $\mathfrak{HP}_2 (t)$ of the hive algebra $\mathrm{HA}(2)$ is
\begin{align*}
\mathfrak{HP}_2 (t) &= \frac{1}{(1-t)^2 (1-t^2)^3} \\
   &=1 + 2 t + 6 t^2 + 10 t^3 + 20 t^4 + 30 t^5 + 50 t^6 + 70 t^7  + 105 t^8 + 140 t^9 + \cdots .
\end{align*}
\end{enumerate}
\end{theorem}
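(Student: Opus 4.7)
The plan is to exhibit an explicit monoid isomorphism $\mathcal{H}(2)\cong \mathbb{Z}_{\ge 0}^{5}$, from which both assertions will follow at once. First I would observe that for $n=2$ every hive $h=(h_{ij})_{1\le j\le i\le 3}$ is determined by its boundary $(\lambda,\mu,\nu)$: using $h_{11}=0$ and the identity $|\lambda|=|\mu|+|\nu|$, the remaining entries are recovered as $h_{21}=\mu_{1}$, $h_{22}=\lambda_{1}$, $h_{31}=\mu_{1}+\mu_{2}$, $h_{32}=\mu_{1}+\mu_{2}+\nu_{1}$, $h_{33}=\lambda_{1}+\lambda_{2}$. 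Each of the three rhombus types contributes a single nontrivial inequality when $n=2$, and after substitution these collapse to
\[
\mu_{2}+\nu_{2}\;\le\;\lambda_{2},\qquad \lambda_{2}\;\le\;\mu_{1}+\nu_{2},\qquad \lambda_{2}\;\le\;\mu_{2}+\nu_{1}.
\]

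Next I would introduce the candidate fundamental hives $G_{1},\ldots,G_{5}$ with respective boundaries $((1,0),(1,0),(0,0))$, $((1,0),(0,0),(1,0))$, $((1,1),(1,1),(0,0))$, $((1,1),(0,0),(1,1))$, and $((1,1),(1,0),(1,0))$, and solve $h=\sum_{i=1}^{5}n_{i}G_{i}$ for the $n_{i}$ in terms of the boundary of $h$. Direct linear algebra yields the unique solution
\[
n_{1}=\mu_{1}+\nu_{2}-\lambda_{2},\quad n_{2}=\mu_{2}+\nu_{1}-\lambda_{2},\quad n_{3}=\mu_{2},\quad n_{4}=\nu_{2},\quad n_{5}=\lambda_{2}-\mu_{2}-\nu_{2}.
\]
The crux is that $n_{1},n_{2},n_{5}\ge 0$ is precisely the triple of rhombus inequalities above, while $n_{3},n_{4}\ge 0$ is part of $\mu,\nu$ being partitions; the remaining dominance conditions on $\lambda,\mu,\nu$ then follow automatically from $\mu_{1}-\mu_{2}=n_{1}+n_{5}$, $\nu_{1}-\nu_{2}=n_{2}+n_{5}$, and $\lambda_{1}-\lambda_{2}=n_{1}+n_{2}$. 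Thus $(n_{1},\ldots,n_{5})\mapsto \sum n_{i}G_{i}$ is a monoid isomorphism $\mathbb{Z}_{\ge 0}^{5}\to \mathcal{H}(2)$, and assertion (1) follows since $\mathrm{HA}(2)$ is the monoid algebra of $\mathcal{H}(2)$.

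For (2), the $\mathbb{Z}$-grading of \eqref{degree-d} assigns to each $G_{i}$ its $|\lambda|$-degree, namely $1$ for $G_{1},G_{2}$ and $2$ for $G_{3},G_{4},G_{5}$; the HP series of a polynomial ring on generators of these degrees is
\[
\mathfrak{HP}_{2}(t)=\frac{1}{(1-t)^{2}(1-t^{2})^{3}},
\]
and the displayed power-series coefficients follow from a routine expansion. The only step that requires care is the collapse of the three rhombus conditions to the three inequalities above and the corresponding identification with $n_{1},n_{2},n_{5}\ge 0$; once the generators $G_{1},\ldots,G_{5}$ are correctly guessed this is a small, essentially mechanical verification, so no step is genuinely hard.
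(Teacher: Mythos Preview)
Your argument is correct. The five generators $G_{1},\ldots,G_{5}$ you write down are exactly the paper's Hilbert basis elements $h_{3},h_{5},h_{4},h_{1},h_{2}$ (in that order), and the degree count and HP series agree.

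The route differs slightly from the paper's. The paper computes the Hilbert basis of $\mathcal{H}(2)$ (by hand or via Normaliz), writes down the five corresponding monomials $\mathbf{z}^{h_i}$, and then asserts their algebraic independence to conclude $\mathrm{HA}(2)$ is polynomial. You instead exploit the fact that for $n=2$ the hive is determined by its boundary, translate the three rhombus inequalities into linear conditions on $(\lambda,\mu,\nu)$, and produce an explicit linear change of variables $(n_{1},\ldots,n_{5})\leftrightarrow(\lambda,\mu,\nu)$ under which the hive cone becomes the nonnegative orthant. This establishes the monoid isomorphism $\mathcal{H}(2)\cong\mathbb{Z}_{\ge 0}^{5}$ directly, so freeness (equivalently, algebraic independence of the generators of $\mathrm{HA}(2)$) is automatic rather than a separate check. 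Your approach is more self-contained and avoids any appeal to software; the paper's approach is faster to state and generalizes uniformly to $n=3,4$, where the monoid is no longer free and one genuinely needs to compute relations.
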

\begin{proof}
For Statement (1), by a direct computation or by a software tool for 
computing lattice points, for example Normaliz \cite{BIRS}, 
we can easily obtain the following Hilbert basis of
the hive cone $\mathcal{H}(2)$.
\begin{align*}
& h_1 =\left[
\begin{array}{ccccc}
  &   & 0 &   & \\
  & 0 &   & 1 &  \\
0 &   & 1 &   & 2 
\end{array}
\right], \ 
h_2 = \left[
\begin{array}{ccccc}
  &   & 0 &   & \\
  & 1 &   & 1 &  \\
1 &   & 2 &   & 2 
\end{array}
\right], \ 
h_3 =\left[
\begin{array}{ccccc}
  &   & 0 &   & \\
  & 1 &   & 1 &  \\
1 &   & 1 &   & 1 
\end{array}
\right], \ \\
& h_4 = \left[
\begin{array}{ccccc}
  &   & 0 &   & \\
  & 1 &   & 1 &  \\
2 &   & 2 &   & 2 
\end{array}
\right], \ 
h_5 = \left[
\begin{array}{ccccc}
  &   & 0 &   & \\
  & 0 &   & 1 &  \\
0 &   & 1 &   & 1 
\end{array}
\right].    
\end{align*}
The monomials $\mathbf{z}^{h_i} \in \mathrm{HA}(2)$ corresponding 
to the generators $h_i$ of $\mathcal{H}(2)$ 
\[
z_{22}z_{32}z_{33}^2, \ \ z_{21}z_{22}z_{31}z_{32}^2 z_{33}^2, \ \ z_{21}z_{22}z_{31}z_{32}z_{33}, 
\ \ z_{21}z_{22}z_{31}^2 z_{32}^2 z_{33}^2, \ \  z_{22}z_{32}z_{33}
\]
generate the algebra $\mathrm{HA}(2)$.
Since these generators are algebraically independent,  
$\mathrm{HA}(2)$ is isomorphic to the polynomial algebra in five indeterminates.

For Statement (2), with the grading \eqref{degree-d}, note that the degrees of the generators are
\[
\deg(\mathbf{z}^{h_1})=2, \ \deg(\mathbf{z}^{h_2})=2, \ \deg(\mathbf{z}^{h_3})=1, \ \deg(\mathbf{z}^{h_4})=2, \ \deg(\mathbf{z}^{h_5})=1.
\]
Then, the hive algebra $\mathrm{HA}(2)$
is isomorphic to the weighted polynomial ring corresponding 
to the weighted projective space $\mathbb{P}(2, 2, 1, 2, 1)$.
Every monomial in $\mathbb{C}[x_1, x_2,..., x_5]$ appears in
\[
\prod_{i=1}^5 \frac{1}{1-x_i} 
= \sum_{(m_1, ..., m_5)} x_1^{m_1}x_2^{m_2} \cdots x_5^{m_5},
\]
and therefore, by replacing $x_i$ with $t^{d_{i}}$ where 
$d_i =  \deg(\mathbf{z}^{h_i})$ for $1 \leq i \leq 5$, 
we obtain the HP series of $\mathrm{HA}(2)$.
\end{proof}

We note that there are ten LR tableaux for $\mathrm{GL}(2)$ 
on skew diagrams whose outer diagrams have three boxes.
\begin{align}\label{LR-tab-gl2}
&\young(\ \ \ ), \ \ \young(\ \ ,\ ), \ \ \young(\ \ 1),
  \ \  \young(\ \ ,1), \ \  \young(\ 1,\ ),\\ 
&\young(\ 11), \ \ \young(\ 1,1), \ \  \young(\ 1,2), 
  \ \ \young(111), \ \ \young(11,2). \notag
\end{align}
As we observed in Proposition \ref{md-LRsum}, this agrees with 
the coefficient of $t^3$ in the above series $\mathfrak{HP}_2 (t)$.


\begin{theorem}[Hive algebra for $\mathrm{GL}(3)$] \ \label{Hive-alg-GL3}
\begin{enumerate}
\item The hive algebra $\mathrm{HA}(3)$ is isomorphic to the quotient
of the polynomial algebra in ten indeterminates $x_i$ 
by the ideal generated by $ x_1 x_ 6 x_7 - x_5 x_{10}$.
\[
\mathrm{HA}(3) \cong \mathbb{C}[x_1, ..., x_{10}] / \langle x_1 x_ 6 x_7 - x_5 x_{10}\rangle.
\]

\item The HP series $\mathfrak{HP}_3 (t)$ of $\mathrm{HA}(3)$ is 
\begin{align*}
\mathfrak{HP}_3 (t) 
&= \frac{1- t^6}{(1-t)^2 (1-t^2)^3 (1-t^3)^4 (1-t^4)} \\
&=1 + 2 t + 6 t^2 + 14 t^3 + 29 t^4 + 56 t^5 + 
 105 t^6 + 182 t^7 + 308 t^8  + 502 t^9 + \cdots .
\end{align*} 
\end{enumerate}
\end{theorem}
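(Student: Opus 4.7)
The plan is to mirror the proof of Theorem~\ref{hive-alg-gl2} for the case $n=3$. For statement~(1), I would first enumerate a Hilbert basis of the hive cone $\mathcal{H}(3) \subset \mathbb{Z}^{10}$, which is cut out by $h_{11}=0$, the three rhombus inequalities, and the non-negative dominance of the boundary. The shape of the HP series in (2) suggests that this Hilbert basis consists of exactly ten hives $h_1,\ldots,h_{10}$, with bottom-right corners $h_{i,44}$ distributed as $1,1,2,2,2,3,3,3,3,4$. I would verify this directly with Normaliz~\cite{BIRS} or by a finite case analysis. Writing down the corresponding monomials $\mathbf{z}^{h_i} \in \mathbb{C}[z_{ij}]$ and inspecting the arrays yields the semigroup identity $h_1 + h_6 + h_7 = h_5 + h_{10}$, and hence the binomial $x_1 x_6 x_7 - x_5 x_{10}$ lies in the kernel of the surjection
\[
\varphi \colon \mathbb{C}[x_1,\ldots,x_{10}] \twoheadrightarrow \mathrm{HA}(3), \qquad x_i \mapsto \mathbf{z}^{h_i}.
\]

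To show that this is the only relation, I would use a dimension count rather than a direct S-pair reduction on the toric ideal. The lattice generated by $\mathcal{H}(3)$ has rank $9$, since a hive for $\mathrm{GL}(3)$ has $10$ entries subject only to the affine constraint $h_{11}=0$ (equivalently, $8$ independent boundary parameters together with the single interior entry $h_{32}$). Therefore $\mathrm{HA}(3)$ is an integral domain of Krull dimension $9$, and $\ker\varphi$ is a prime ideal of height $10-9=1$ in a UFD, hence principal. Since the binomial $x_1 x_6 x_7 - x_5 x_{10}$ is irreducible, it must generate $\ker\varphi$.

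For statement~(2), once (1) is in hand, the relation is homogeneous of degree $6$ by additivity of $\deg(\mathbf{z}^h) = h_{44}$, since $h_{1,44}+h_{6,44}+h_{7,44} = h_{5,44}+h_{10,44}=6$. The Koszul sequence
\[
0 \longrightarrow R(-6) \xrightarrow{\cdot(x_1 x_6 x_7 - x_5 x_{10})} R \longrightarrow \mathrm{HA}(3) \longrightarrow 0,
\]
in which $R = \mathbb{C}[x_1,\ldots,x_{10}]$ carries the weighted grading $\deg(x_i) = h_{i,44}$, then yields
\[
\mathfrak{HP}_3(t) = \frac{1-t^6}{\prod_{i=1}^{10}(1-t^{h_{i,44}})} = \frac{1-t^6}{(1-t)^2(1-t^2)^3(1-t^3)^4(1-t^4)},
\]
and the initial coefficients follow by routine power-series expansion.

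The main obstacle is the enumeration of the ten Hilbert basis hives together with the verification of the distribution of their degrees; the single binomial relation is then found by inspection, and the height-one principal-ideal argument above replaces any explicit SAGBI or Gr\"obner computation needed to certify that no further relations remain. All remaining steps are formal.
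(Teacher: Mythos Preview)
Your proposal is correct and largely parallels the paper's argument: both enumerate the ten Hilbert basis elements of $\mathcal{H}(3)$ (the paper does this explicitly via Normaliz and lists the ten hives, confirming your degree distribution $1,1,2,2,2,3,3,3,3,4$), observe the identity $h_1+h_6+h_7=h_5+h_{10}$, and then derive the HP series from the single degree-$6$ relation exactly as you do via the short exact sequence.

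The one genuine difference is in certifying that this binomial generates the whole kernel. The paper simply computes $\ker\varphi$ with Macaulay~2 and reports the single generator. You instead argue conceptually: the semigroup spans a rank-$9$ lattice, so $\mathrm{HA}(3)$ is a $9$-dimensional domain, forcing $\ker\varphi$ to be a height-one prime in the UFD $\mathbb{C}[x_1,\dots,x_{10}]$, hence principal; since $x_1x_6x_7-x_5x_{10}$ is irreducible (it is linear in $x_1$ with content $1$) and lies in the kernel, it must generate. This is a clean replacement for the computer-algebra step and would work equally well in the paper; conversely, the paper's direct computation avoids having to justify the rank-$9$ claim and the irreducibility. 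Both routes still depend on the Normaliz enumeration of the Hilbert basis, which remains the substantive computational input.
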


\begin{proof}
For Statement (1), first we 
recall that the hive cone $\mathcal{H}(3)$ is the collection 
of all the integral points
\[
(h_{11}, h_{21}, h_{22}, h_{31}, h_{32}, h_{33}, h_{41}, h_{42}, h_{43}, h_{44}) 
\]
satisfying the conditions in Definition \ref{definition-hive}.
Using a computer program 
such as Normaliz \cite{BIRS}, it is straightforward to verify that 
the following ten elements form 
the Hilbert basis of  the hive cone $\mathcal{H}(3)$:
\begin{align*}
&h_1 = (0, 0, 1, 0, 1, 1, 0, 1,1,1),
&h_2 =(0, 0, 1, 0, 1, 2, 0, 1, 2, 3),\\
&h_3 = (0, 0, 1, 0, 1, 2, 0, 1 , 2, 2),
&h_4 = (0, 1, 1, 1, 1, 1, 1, 1, 1, 1), \\
&h_5 =(0, 1, 1, 1, 2, 2, 1, 2,2,2),
&h_6 =(0, 1, 1, 1, 2, 2, 1, 2, 3,3), \\
&h_7 =(0, 1, 1, 2, 2, 2, 2, 2,2,2), 
&h_8 =(0, 1, 1, 2, 2, 2, 2, 3,3,3), \\
&h_9 =(0, 1, 1, 2, 2, 2, 3,3,3,3), 
&h_{10} =(0, 1, 2, 2, 3,3,2,3,4,4).
\end{align*}

On the other hand, by computing the kernel 
of the map from the polynomial algebra $\mathbb{C}[x_1,...,x_{10}]$ to 
the hive algebra $\mathrm{HA}(3)$ sending $x_i$ to $\mathbf{z}^{h_i}$ 
(by using a computer program such as Macaulay 2 \cite{GS}), we 
find the following basic relation:
\begin{equation*} 
h_1 + h_6 + h_7 = h_5 + h_{10} \text{\ \ and therefore\ \ } 
\mathbf{z}^{h_1} \cdot \mathbf{z}^{h_6} \cdot \mathbf{z}^{h_7} = 
\mathbf{z}^{h_5} \cdot \mathbf{z}^{h_{10}}
\end{equation*}
in $\mathcal{H}(3)$ and $\mathrm{HA}(3)$ respectively. 

For Statement (2), note that since the $\mathbb{Z}$-grading 
of the algebra $\mathrm{HA}(3)$ is given by the degree of $z_{44}$ (see \eqref{degree-d}),  
the degrees of our generators are
\begin{align*}
& \deg(\mathbf{z}^{h_1})=1, \ \deg(\mathbf{z}^{h_2})=3, \ \deg(\mathbf{z}^{h_3})=2,
 \ \deg(\mathbf{z}^{h_4})=1, \ \deg(\mathbf{z}^{h_5})=2, \\
& \deg(\mathbf{z}^{h_6})=3, \ \deg(\mathbf{z}^{h_7})=2, \ \deg(\mathbf{z}^{h_8})=3, 
\ \deg(\mathbf{z}^{h_9})=3, \ \deg(\mathbf{z}^{h_{10}})=4.
\end{align*}
Then, the hive algebra $\mathrm{HA}(3)$ is isomorphic to 
the ring of a hypersurface determined by the homogeneous polynomial  
$f=x_1 x_6 x_7 - x_5 x_{10}$ in the weighted projective space
\[ 
\mathbb{P}(1, 3, 2, 1, 2, 3, 2, 3, 3, 4).
\] 
Using the same argument given in the proof of Theorem \ref{hive-alg-gl2} (2),
we can compute the HP series of the above weighted projective space, which is
\[
H(t)=\frac{1}{(1-t)^2 (1-t^2)^3 (1-t^3)^4 (1-t^4)}.
\]
Next, we note that the degree of $f$ is $6$. 
By comparing the space of the degree $d$ elements in $\mathbb{C}[x_1, ..., x_{10}]$ and
the space of the degree $d$ elements of the form $f\cdot g$ for $g \in 
\mathbb{C}[x_1, ..., x_{10}]$, we obtain the HP series of the quotient 
$\mathbb{C}[x_1, ..., x_{10}] / \langle f \rangle$  
\[
(1 - t^6) \cdot H(t),
\]
which is the HP series of $\mathrm{HA}(3)$ in the statement.
One can also use some general results on the HP series of a graded ring 
corresponding to a complete intersection 
in a weighted projective space. See for example \cite[\S 3.4]{D82}.
\end{proof}

There are fourteen LR tableaux for $\mathrm{GL}(3)$ 
on skew diagrams whose outer diagrams have three boxes.
\begin{align}\label{LR-tab-gl3}
&\young(\ \ \ ), \ \ \young(\ \ ,\ ), \ \ \young(\ ,\ ,\ ), \ \ \young(\ \ 1),
\ \  \young(\ \ ,1), \ \  \young(\ 1,\ ), \ \ \young(\ ,\ ,1),\\ 
&\young(\ 11), \ \ \young(\ 1,1), \ \  \young(\ 1,2), \ \ \young(\ ,1,2), 
\ \ \young(111), \ \ \young (11,2), \ \ \young(1,2,3). \notag
\end{align}
As in Proposition \ref{md-LRsum}, this agrees with 
the coefficient of $t^3$ in $\mathfrak{HP}_3 (t)$.


\begin{theorem}[Hive algebra for $\mathrm{GL}(4)$] \  \label{Hive-alg-4}
\begin{enumerate}
\item The hive algebra $\mathrm{HA}(4)$ is isomorphic to the quotient
of the polynomial algebra $\mathbb{C}[x_1, ..., x_{20}]$ 
by the ideal generated by the following fifteen basic relations
\begin{align*}
&r_1: x_1  x_7 x_9 - x_6  x_{15}, 
&r_2: x_1  x_8  x_9 - x_6 x_{16}, \\
&r_3: x_1 x_{11} x_{12} - x_{10} x_{18}, 
&r_4: x_6  x_{11}  x_{12} - x_{10} x_{20}, \\
&r_5: x_2  x_8  x_{10} - x_7 x_{17}, 
&r_6: x_2  x_8  x_{12} - x_7 x_{19}, \\
&r_7: x_6  x_{18} - x_1 x_{20}, 
&r_8: x_7 x_{16} - x_8 x_{15}, \\
&r_9: x_{10}  x_{19} - x_{12} x_{17},  
&r_{10}: x_{15} x_{17} - x_{2}  x_{10}  x_{16}, \\
&r_{11}: x_{15} x_{19} - x_{2} x _{12} x_{16}, 
&r_{12}: x_{15} x_{20} - x_{7} x_{9} x_{18}, \\
&r_{13}: x_{16} x_{20} - x_8 x_{9} x_{18}, 
&r_{14}: x_{17} x_{20} - x_{6} x_{11} x_{19},\\
&r_{15}: x_{17} x_{18} - x_{1} x_{11} x_{19}.
\end{align*}

\item The HP series of $\mathrm{HA}(4)$ is 
$\mathfrak{HP}_4 (t) = f(t)/g(t)$ where
\begin{align*}
f(t) &=1 -2 t -2 t^2 + 10 t^3 -2 t^4 -24 t^5 + 22 t^6 + 32 t^7 -54 t^8 \\
     & \quad -18 t^9 + 80 t^{10} -14 t^{11} -72 t^{12} + 34 t^{13} 
 + 44 t^{14} -18 t^{15} -25 t^{16} \\ 
     & \quad -18 t^{17} + 44 t^{18} + 34 t^{19} -72 t^{20} -14 t^{21} + 
       80 t^{22} -18 t^{23} -54 t^{24} \\
     & \quad + 32 t^{25} + 22 t^{26} -24 t^{27} -2 t^{28} + 10 t^{29} -2 t^{30} 
 -2 t^{31} + t^{32}; \\
g(t) &= (1-t)^4 (1-t^2)^6 (1-t^{12})^4,
\end{align*}
which is
\[
\mathfrak{HP}_4 (t) =1 + 2 t + 6 t^2 + 14 t^3 + 34 t^4 + 68 t^5 + 
 142 t^6 + 268 t^7 + 508 t^8 + 902 t^9 + \cdots. 
\]
\end{enumerate}
\end{theorem}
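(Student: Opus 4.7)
The plan is to follow the computational pattern of Theorems \ref{hive-alg-gl2} and \ref{Hive-alg-GL3} but at a larger scale. The hive cone $\mathcal{H}(4)$ sits inside $\mathbb{Z}^{15}$ (the triangular array has $15$ entries, with $h_{11}=0$ reducing the effective dimension to $14$) and is cut out by the three families of rhombus inequalities together with the non-negativity and monotonicity of the boundary. The resulting presentation is no longer a complete intersection, so the conceptual argument of Theorem \ref{Hive-alg-GL3} must be supplemented by a structural input.

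For Statement (1), I would first run Normaliz on the cone defined by the inequalities of Definition \ref{definition-hive}; the output should be a Hilbert basis $h_1, \ldots, h_{20}$. Then, as in the $\mathrm{GL}(3)$ case, consider the surjective $\mathbb{C}$-algebra homomorphism $\pi: \mathbb{C}[x_1, \ldots, x_{20}] \to \mathrm{HA}(4)$ sending $x_i \mapsto \mathbf{z}^{h_i}$, whose kernel is the toric ideal $I$ of the monoid $\mathcal{H}(4)$. A computation of a minimal set of binomial generators of $I$, for instance via Macaulay 2 or 4ti2, should yield exactly the listed fifteen binomials $r_1, \ldots, r_{15}$. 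Each relation can be independently verified by adding the corresponding exponent vectors inside $\mathcal{H}(4) \subset \mathbb{Z}^{15}$ and checking equality. The completeness of the list (that these fifteen binomials generate $I$ and not merely a subideal with the same variety) is guaranteed by the Gr\"obner basis computation itself.

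For Statement (2), the weighted-projective-space argument used for $\mathrm{HA}(3)$ is unavailable because the codimension of $I$ in $\mathbb{C}[x_1, \ldots, x_{20}]$ is $6$, whereas $I$ has $15$ minimal generators. Instead, I would invoke the fact that the hive cone $\mathcal{H}(4)$ is a normal affine monoid (being the intersection of a rational polyhedral cone with a lattice), so by Hochster's theorem $\mathrm{HA}(4)$ is a Cohen-Macaulay graded ring of Krull dimension $14$. Any homogeneous system of parameters $(\theta_1, \ldots, \theta_{14})$ of degrees $(e_1, \ldots, e_{14})$ then produces
\[
\mathfrak{HP}_4(t) = \frac{f(t)}{\prod_{i=1}^{14}(1-t^{e_i})},
\]
where $f(t)$ is the HP series of the Artinian quotient $\mathrm{HA}(4)/(\theta_1, \ldots, \theta_{14})$ and is automatically a polynomial. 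The displayed denominator $(1-t)^{4}(1-t^{2})^{6}(1-t^{12})^{4}$ reflects a choice with four parameters in degree $1$, six in degree $2$, and four in degree $12$.

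The main obstacle will be Statement (2). The numerator $f(t)$ has degree $32$ with alternating-sign coefficients, so it is not a Hilbert polynomial in any obvious combinatorial sense and cannot be guessed by hand; the proof must therefore rely on a computer algebra computation, either by applying a \texttt{hilbertSeries} routine directly to the quotient $\mathbb{C}[x_1, \ldots, x_{20}]/I$ with the weighting $\deg(x_i) = \deg(\mathbf{z}^{h_i})$ inherited from the bottom-right entry of each basis element $h_i$, or by explicitly exhibiting a regular sequence of the above degrees and computing the Hilbert function of the resulting Artinian quotient. A secondary subtlety is making sure the degrees $\deg(x_i)$ are correctly assigned from the $\mathbb{Z}$-grading via $z_{55}$, exactly as was done in the $\mathrm{GL}(3)$ case.
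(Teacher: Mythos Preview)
Your approach to Statement~(1) matches the paper's exactly: Normaliz for the Hilbert basis of $\mathcal{H}(4)$, then a toric-ideal computation (the paper uses Macaulay~2) for the kernel of $x_i \mapsto \mathbf{z}^{h_i}$. For Statement~(2), the paper takes a more bare-bones route than you propose: it simply feeds the cone $\mathcal{H}(4)$ back into Normaliz with the grading vector picking out the coordinate $h_{55}$ and reads off the HP series from the output (the appendix reproduces the input files and output), with no invocation of Hochster's theorem, Cohen--Macaulayness, or a homogeneous system of parameters. Your structural framing is correct and explains \emph{why} the series has a polynomial numerator over that denominator---indeed the Normaliz output even records that the numerator is symmetric, i.e.\ the ring is Gorenstein---but the paper treats this purely as a software computation. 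One small caveat: Normaliz produces that particular denominator via a triangulation of the cone rather than by exhibiting an explicit regular sequence of degrees $(1^4,2^6,12^4)$, so interpreting the denominator as the degree vector of a specific hsop is your own (valid) gloss rather than something the computation itself certifies.
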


We will verify these statements with the aid of a computer. 
The Normaliz  codes we used 
are given in \S \ref{appendix}.
First, let us recall that the hive monoid $\mathcal{H}(4)$ 
for $\mathrm{GL}(4)$ is the collection of all the
triangular arrays of integers in Figure \ref{triang-gl4}
or the integral points
\[
(h_{11}, h_{21}, h_{22}, h_{31}, h_{32}, h_{33}, h_{41}, h_{42}, h_{43}, h_{44}, 
h_{51}, h_{52}, h_{53}, h_{54}, h_{55})
\]
satisfying the conditions in Definition \ref{definition-hive}.
Then, we can compute the Hilbert basis of the hive cone $\mathcal{H}(4)$ 
by using Normaliz \cite{BIRS}:
\begin{align*}
&h_1 = (0, 0, 1, 0, 1, 1, 0, 1,1,1, 0, 1,1,1,1),
&h_2 =(0, 0, 1, 0, 1, 2, 0, 1, 2, 2, 0, 1, 2, 2, 2),\\
&h_3 = (0, 0, 1, 0, 1, 2, 0, 1 , 2, 3, 0, 1, 2, 3, 3),
&h_4 = (0, 0, 1, 0, 1, 2, 0, 1, 2, 3, 0, 1, 2, 3, 4), \\
&h_5 =(0, 1, 1, 1, 1, 1, 1, 1, 1, 1, 1, 1,1 , 1, 1),
&h_6 =(0, 1, 1, 1, 2, 2, 1, 2, 2, 2, 1, 2, 2, 2, 2), \\
&h_7 =(0, 1, 1, 1, 2, 2, 1, 2, 3, 3, 1, 2, 3, 3, 3), 
&h_8 =(0, 1, 1, 1, 2, 2, 1, 2, 3, 3, 1, 2, 3, 4, 4), \\
&h_9 =(0, 1, 1, 2, 2, 2, 2, 2, 2, 2, 2, 2, 2, 2, 2), 
&h_{10} =(0, 1, 1, 2, 2, 2,2, 3, 3, 3, 2, 3, 3, 3, 3), \\
&h_{11} = (0, 1, 1, 2, 2, 2, 2, 3, 3, 3, 2, 3, 4, 4, 4), 
&h_{12} =(0, 1, 1, 2, 2, 2, 3, 3,3 , 3, 3, 3, 3, 3, 3),\\
&h_{13} =(0, 1, 1, 2, 2, 2, 3, 3, 3, 3, 3, 4, 4, 4, 4), 
&h_{14} =(0, 1, 1, 2, 2, 2, 3, 3, 3, 3, 4, 4, 4, 4, 4), \\
&h_{15} =(0, 1, 2, 2, 3, 3, 2, 3, 4, 4,2, 3, 4, 4, 4), 
&h_{16} =(0, 1, 2, 2, 3, 3, 2, 3, 4, 4, 2, 3, 4, 5, 5), \\
&h_{17} =(0, 1, 2, 2, 3, 4, 2, 4, 5, 5, 2, 4, 5, 6, 6), 
&h_{18} =(0, 1, 2, 2, 3, 3, 3, 4, 4, 4, 3, 4, 5, 5, 5), \\
&h_{19} =(0, 1, 2, 2, 3, 4, 3, 4, 5, 5, 3, 4, 5, 6, 6), 
&h_{20} =(0, 2, 2, 3, 4, 4, 4, 5, 5,5, 4, 5, 6, 6, 6).
\end{align*}

Now, by computing, with for example Macaulay 2 \cite{GS}, the kernel of the map from
the polynomial algebra $\mathbb{C}[x_1,...,x_{20}]$ to the hive algebra $\mathrm{HA}(4)$
sending $x_i$ to $\mathbf{z}^{h_i}$, we obtain the following $15$
basic relations:
\begin{align*}
&r'_1: h_1 + h_7 +  h_9 = h_6 + h_{15}, 
&r'_2: h_1 + h_8 + h_9 = h_6 + h_{16}, \\
&r'_3: h_1 + h_{11} + h_{12} = h_{10} + h_{18}, 
&r'_4: h_6 + h_{11} + h_{12} = h_{10} + h_{20}, \\
&r'_5: h_2 + h_8 + h_{10} = h_7 + h_{17}, 
&r'_6: h_2 + h_8 + h_{12} = h_7 + h_{19}, \\
&r'_7: h_6 + h_{18} = h_1 + h_{20}, 
&r'_8: h_7 + h_{16} = h_8 + h_{15}, \\
&r'_9: h_{10} + h_{19} = h_{12} + h_{17},  
&r'_{10}: h_{15} + h_{17} = h_{2} + h_{10}  + h_{16}, \\
&r'_{11}: h_{15} + h_{19} = h_{2} + h_{12} + h_{16}, 
&r'_{12}: h_{15} + h_{20} = h_{7} + h_{9} + h_{18}, \\
&r'_{13}: h_{16} + h_{20} = h_8 + h_{9} + h_{18}, 
&r'_{14}: h_{17} + h_{20} = h_{6} + h_{11} + h_{19},\\
&r'_{15}: h_{17} + h_{18} = h_{1} + h_{11} + h_{19}.
\end{align*}

With \eqref{degree-d}, the degrees of the generators are 
given by $h_{55}$, and therefore we have
\begin{align*}
& \deg(\mathbf{z}^{h_1})=1, \ \deg(\mathbf{z}^{h_2})=2, \ \deg(\mathbf{z}^{h_3})=3, 
\ \deg(\mathbf{z}^{h_4})=4, \ \deg(\mathbf{z}^{h_5})=1, \\
& \deg(\mathbf{z}^{h_6})=2, \ \deg(\mathbf{z}^{h_7})=3, \ \deg(\mathbf{z}^{h_8})=4, 
\ \deg(\mathbf{z}^{h_9})=2, \ \deg(\mathbf{z}^{h_{10}})=3, \\
& \deg(\mathbf{z}^{h_{11}})=4, \ \deg(\mathbf{z}^{h_{12}})=3, \ 
\deg(\mathbf{z}^{h_{13}})=4, \ \deg(\mathbf{z}^{h_{14}})=4, \ 
\deg(\mathbf{z}^{h_{15}})=4, \\
& \deg(\mathbf{z}^{h_{16}})=5, \ \deg(\mathbf{z}^{h_{17}})=6, \ 
\deg(\mathbf{z}^{h_{18}})=5, \ \deg(\mathbf{z}^{h_{19}})=6, \ 
\deg(\mathbf{z}^{h_{20}})=6.
\end{align*}
Now by using the software Normaliz \cite{BIRS}, we can compute 
the HP series of the hive cone $\mathcal{H}(4)$,
which is $f(t)/g(t)$ given in the statement. See \S \ref{appendix} for more details.

\bigskip


\section{Generators of Highest Weight Vectors}\label{tensor-algebra}


In this section, we first review the construction of 
the $\mathrm{GL}(n)$ tensor product algebra and its properties  
given in \cite{HTW05, HJLTW}. Then, for each $n=2, 3, 4$, we give 
a finite presentation of the algebra $\mathrm{TA}(n)$ 
in terms of generators and relations. 
This will give a method to construct  
highest weight vectors appearing in the decomposition of 
the tensor product of two irreducible polynomial 
representations of $\mathrm{GL}(n)$.

\subsection{$\mathrm{GL}(n)$ tensor product algebra}

Let us recall the $\mathrm{GL}(n)$ tensor product algebra\footnote{Howe et al. 
investigated a family of algebras parameterized by three positive integers $n$, $p$, and $q$. 
In this paper, we will focus on the case $n=p=q$ related to 
the most general form of the LR coefficients.} introduced  
by Howe et al. in \cite{HTW05, HJLTW}. See also \cite{HL12, Le13}.
We consider two copies of the space $M_n$ of $n\times n$ complex matrices,
and use the coordinates $(x_{ij})$ and $(y_{ij})$ respectively. 
Therefore, a typical element $(m_1, m_2)$ in the space $M_n \oplus M_n$ is
\[
 \left[
\begin{array}{cccccc}
x_{11} &  \cdots & x_{1n} &    y_{11} & \cdots  & y_{1n} \\
\vdots &  \ddots & \vdots  &  \vdots   & \ddots & \vdots    \\
x_{n1} &  \cdots & x_{nn} &    y_{n1} & \cdots  & y_{nn} 
\end{array} \right].
\]
The three copies of $\mathrm{GL}(n)$ act on 
the algebra $\mathbb{C}[M_n \oplus M_n]$ of polynomials on $M_n \oplus M_n$ 
as follows:
\[
( g \cdot f )(m_1, m_2) =  f(g_1^t m_1 g_2, g_1^t m_2 g_3 )
\]
for $g=(g_1, g_2, g_3) \in \mathrm{GL}(n) \times \mathrm{GL}(n) 
\times \mathrm{GL}(n)$ and $f\in \mathbb{C}[M_n \oplus M_n]$.

We write $U_n$ for the maximal unipotent subgroup of $\mathrm{GL}(n)$ consisting 
of all the upper triangular matrices with $1$'s on the diagonal, and
$A_n$ for the maximal torus of $\mathrm{GL}(n)$ consisting of all the
diagonal matrices. 

\begin{definition}
The $\mathrm{GL}(n)$ tensor product algebra $\mathrm{TA}(n)$ is 
the algebra of polynomials in $\mathbb{C}[M_n \oplus M_n]$ 
invariant under the subgroup $U_n \times U_n \times U_n$ 
of $\mathrm{GL}(n) \times \mathrm{GL}(n) \times \mathrm{GL}(n)$
\begin{align*}
\mathrm{TA}(n) &= \mathbb{C}[M_n \oplus M_n]^{U_n \times U_n \times U_n} \\
	&=\{ f \in \mathbb{C}[M_n \oplus M_n] : g \cdot f = f 
	\text{\ for all $g \in U_n \times U_n \times U_n$}\}.
\end{align*}
\end{definition}

Since $A_n$ normalizes $U_n$, 
the space $\mathrm{TA}(n)$ consists of 
weight vectors under the action of $A_n \times A_n \times A_n$.
In \cite{HTW05}, it is shown that, as an algebra, $\mathrm{TA}(n)$ is 
graded by the triples $(\lambda,\mu,\nu)$ of non-negative dominant sequences;
the $(\lambda,\mu,\nu)$-homogeneous component 
$\mathrm{TA}(n)_{\mu \nu}^{\lambda}$ consists of the highest weight vectors
of the isomorphic copies of $V^{\lambda}_n$ 
occurring in the decomposition of the tensor product $V^{\mu}_n \otimes V^{\nu}_n$;
the dimension of the space $\mathrm{TA}(n)_{\mu \nu}^{\lambda}$ is equal to
the LR coefficient $c_{\mu \nu}^{\lambda}$. 
Moreover, for each space $\mathrm{TA}(n)_{\mu \nu}^{\lambda}$, 
they constructed explicit $\mathbb{C}$-basis elements $f_T$  associated with 
LR tableaux $T$ on $\lambda/\mu$ with content $\nu$.
\begin{lemma} \cite{HTW05, HJLTW} \label{initial-m}
\begin{enumerate}
\item There is a $\mathbb{C}$-basis for the space $\mathrm{TA}(n)_{\mu \nu}^{\lambda}$ 
\begin{equation*}
 \mathcal{B}_n (\lambda,\mu, \nu)
  = \left\{ f_T \in \mathrm{TA}(n)  : 
   \text{LR tableaux $T$ on $\lambda/\mu$ with content $\nu$} \right\}
\end{equation*}
such that (with respect to a certain monomial order) 
the initial monomial $in(f_T)$  of $f_T$ is
\begin{equation*}
in(f_T) = \prod_{i} x_{ii}^{\mu_i} \cdot \prod_{i,j} y_{ij}^{t_{ij}}
 \in \mathbb{C}[M_n \oplus M_n]
\end{equation*}
where $t_{ij}$ is the number of boxes in the $i$th row of $T$ containing $j$.
In particular, the initial monomials of these basis elements are distinct.
\item The initial algebra $in(\mathrm{TA}(n))$ of $\mathrm{TA}(n)$ is 
generated by $in(f_T)$ for all the LR tableaux $T$ for $\mathrm{GL}(n)$.
\begin{align*}
in(\mathrm{TA}(n)) &= \{in(f) : f \in \mathrm{TA}(n) \} \\
  &= \{ in(f_T) : \text{for all the LR tableaux $T$ for $\mathrm{GL}(n)$} \}.
\end{align*}
\end{enumerate}
\end{lemma}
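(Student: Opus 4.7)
The plan is to realize each basis element $f_T$ as an explicit polynomial invariant, built from determinantal highest-weight vectors in $\mathbb{C}[M_n\oplus M_n]$, and then to exhibit a diagonal-style monomial order under which the exponents of the leading monomial of $f_T$ reproduce exactly the row-content data $(t_{ij})$ of the LR tableau $T$.

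First I would invoke the classical invariant theory description of the ring $\mathbb{C}[M_n\oplus M_n]^{U_n\times U_n\times U_n}$. Because one $U_n$ acts on the left of both $m_1$ and $m_2$ via the transpose, and separate copies of $U_n$ act on the right of each, the natural generators are certain \emph{top-justified} minors: the northwest principal minors $\delta_k(x)=\det(x_{ij})_{1\le i,j\le k}$ of the $x$-block, the analogous minors $\eta_k(y)$ of the $y$-block, and mixed minors formed from the first $k$ rows together with an anchored choice of $k$ columns from one of the two blocks. Each such minor is a simultaneous $A_n^3$-weight vector with a transparent dominant weight, so that triple-dominant components of the invariant ring coincide with the spaces $\mathrm{TA}(n)_{\mu\nu}^\lambda$.

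Next I would construct $f_T$ by decoding $T$ as an ordered product of such minors: the shape $\mu$ contributes a product of northwest $x$-minors of combined weight $(\mu,\mu,0)$, while each box of $\lambda/\mu$ in row $i$ containing the entry $j$ contributes a factor of a $y$-minor built from the first $j$ rows with an anchor column determined by $i$. The semi-standard and Yamanouchi conditions on $T$ are precisely what is needed for the product to lie in a well-defined standardized normal form. Picking the diagonal monomial order, under which $\mathrm{in}(\delta_k(x))=x_{11}x_{22}\cdots x_{kk}$ and analogously for the $y$-minors, a direct expansion yields
\[
\mathrm{in}(f_T) \;=\; \prod_i x_{ii}^{\mu_i}\cdot \prod_{i,j} y_{ij}^{t_{ij}}.
\]
Different tableaux carry different matrices $(t_{ij})$, so the leading monomials are pairwise distinct and the $f_T$ are linearly independent; combining this with $\dim\mathrm{TA}(n)_{\mu\nu}^\lambda = c_{\mu\nu}^\lambda = |\{T\}|$ from the Littlewood-Richardson rule upgrades $\{f_T\}$ to a basis, finishing part (1).

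For part (2), the linearity of leading monomials in products shows that $\mathrm{in}(\mathrm{TA}(n))$ is multiplicatively generated by $\{\mathrm{in}(f_T)\}$, equivalently that $\{f_T\}$ is a SAGBI basis. The main obstacle in this whole program lies in the spanning step: one must show that every element of $\mathrm{TA}(n)_{\mu\nu}^\lambda$ can be reduced, modulo the Pl\"ucker / straightening relations among the generating minors, to a $\mathbb{C}$-combination of the products $f_T$, and that this reduction tracks the semi-standard and Yamanouchi conditions. This is where a careful combinatorial bookkeeping is unavoidable; one can streamline the argument by using the bijection of Lemma \ref{LR-Hive-bij} to translate the multi-degrees into hive data, so that the straightening moves correspond to transparent modifications of hive entries.
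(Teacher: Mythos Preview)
The paper does not prove this lemma; it is quoted from \cite{HTW05, HJLTW}, so there is no in-paper argument to compare against directly. Your outline is in the spirit of those references, but two points need correction.

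First, your description of how $f_T$ is built is not right. The invariant attached to $T$ is assembled \emph{column by column}, not box by box: a column of $T$ with $a$ empty boxes followed by entries $j_1<\cdots<j_k$ contributes the single $(a{+}k)\times(a{+}k)$ minor on rows $1,\dots,a{+}k$ and on columns $1,\dots,a$ from the $x$-block together with $n{+}j_1,\dots,n{+}j_k$ from the $y$-block (this is exactly Notation~\ref{tab-notation} in the paper). Under the diagonal order its leading term is $x_{11}\cdots x_{aa}\,y_{a+1,j_1}\cdots y_{a+k,j_k}$, and the product over all columns gives $\prod_i x_{ii}^{\mu_i}\prod_{i,j}y_{ij}^{t_{ij}}$. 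Your ``$y$-minor built from the first $j$ rows with an anchor column determined by $i$'' does not produce this, and a box-by-box product of such minors is not $U_n^3$-invariant. Note also that the naive product of column minors is itself only the leading piece; for some $T$ one must add lower terms to land in $\mathrm{TA}(n)$, as the paper's generators $g_{10}$, $g_{16}$, $g_{17}$, $g_{19}$ illustrate.

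Second, your final paragraph manufactures a difficulty that is not there. Once part~(1) is established via the dimension count you already invoked ($\dim\mathrm{TA}(n)_{\mu\nu}^{\lambda}=c^{\lambda}_{\mu\nu}=\#\{T\}$), part~(2) is immediate and needs no straightening: any $f\in\mathrm{TA}(n)$ is a finite sum $\sum_T c_T f_T$, and since the $\mathrm{in}(f_T)$ are pairwise distinct monomials, $\mathrm{in}(f)=\mathrm{in}(f_{T_0})$ for the $T_0$ of maximal leading monomial among those with $c_{T_0}\neq 0$. Hence $\{\mathrm{in}(f):f\in\mathrm{TA}(n)\}=\{\mathrm{in}(f_T):T\}$. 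The equality $\dim\mathrm{TA}(n)_{\mu\nu}^{\lambda}=c^{\lambda}_{\mu\nu}$ is a representation-theoretic input, not the output of a Pl\"ucker reduction algorithm.
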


From this, it is also shown that 
there is a flat one parameter family of algebras 
whose general fiber is $\mathrm{TA}(n)$ and special fiber is  
$in(\mathrm{TA}(n))$. See \cite{HTW05, HJLTW} for more details.

\begin{proposition}\label{in(TA)-HA-bij}
The initial algebra $in(\mathrm{TA}(n))$ of $\mathrm{TA}(n)$ is isomorphic 
to the hive algebra $\mathrm{HA}(n)$.
\end{proposition}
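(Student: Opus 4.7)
The plan is to construct an explicit algebra isomorphism $\varphi: \mathrm{HA}(n) \to in(\mathrm{TA}(n))$ by matching monomial generators on both sides. First I would observe, using Lemma \ref{initial-m}, that $in(\mathrm{TA}(n))$ is itself a monomial (hence toric) algebra: it is generated by the $in(f_T)$ for LR tableaux $T$, and part (1) of the lemma ensures these are pairwise distinct. So $in(\mathrm{TA}(n))$ is the semigroup algebra of a submonoid of $\mathbb{N}^{n+n^2}$, just as $\mathrm{HA}(n)$ is the semigroup algebra of $\mathcal{H}(n)$ by construction.

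Next I would define $\varphi$ on generators by $\varphi(\mathbf{z}^h) = in(f_{T_h})$, where $T_h$ is the LR tableau assigned to $h$ by the bijection of Lemma \ref{LR-Hive-bij}. To see that $\varphi$ extends to a ring homomorphism, I would show that the exponent vector of $in(f_{T_h})$ is $\mathbb{Z}$-linear in the entries $h_{ij}$. Unwinding the definition in Lemma \ref{LR-Hive-bij}, the exponent of $x_{ii}$ is $\mu_i = h_{i+1,1} - h_{i,1}$, while the exponent $t_{ij}$ of $y_{ij}$ (the number of $j$'s in row $i$ of $T_h$) works out to
\[
t_{ij} \; = \; h_{i+1,j+1} - h_{i+1,j} - h_{i,j+1} + h_{i,j}
\]
after differencing the counting identity defining $h_{ij}$ in both indices. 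Both expressions are manifestly $\mathbb{Z}$-linear in the $h_{ij}$, so the assignment $h \mapsto$ (exponent vector of $in(f_{T_h})$) is additive on $\mathcal{H}(n)$, and $\varphi$ is multiplicative on generators, hence a well-defined ring homomorphism.

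Finally I would show $\varphi$ is bijective. Surjectivity is immediate from Lemma \ref{initial-m}(2), since the generators $in(f_T)$ of $in(\mathrm{TA}(n))$ all lie in the image. For injectivity, distinct hives $h$ correspond under Lemma \ref{LR-Hive-bij} to distinct LR tableaux $T_h$, whose initial monomials $in(f_{T_h})$ are distinct monomials in $\mathbb{C}[M_n \oplus M_n]$ by Lemma \ref{initial-m}(1); hence the images $\varphi(\mathbf{z}^h)$ form a linearly independent family. The hard part will be the linearity verification in the second step: what this really encodes is that addition in the hive cone corresponds to multiplication of initial monomials, i.e., that the tableau assigned to $h_1 + h_2$ has row-content counts equal to the sums of those for $T_{h_1}$ and $T_{h_2}$, and the cleanest route to this compatibility is through the explicit linear formulas for $\mu_i$ and $t_{ij}$ above.
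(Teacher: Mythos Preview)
Your proposal is correct and follows essentially the same route as the paper's proof, which simply asserts that the bijections of Lemma~\ref{LR-Hive-bij} and Lemma~\ref{initial-m} combine to a monoid isomorphism between $\mathcal{H}(n)$ and the monoid of initial monomials underlying $in(\mathrm{TA}(n))$. Your version is in fact more explicit: you actually write down the linear formulas $\mu_i = h_{i+1,1}-h_{i,1}$ and $t_{ij} = h_{i+1,j+1}-h_{i+1,j}-h_{i,j+1}+h_{i,j}$ that make the additivity (and hence the multiplicativity of $\varphi$) transparent, whereas the paper leaves this verification to the reader and points to the LR-triangle formulation in \cite{HJLTW,PV05} as an alternative.
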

\begin{proof}
One can easily show that the bijection
between hives $h$ and LR tableaux $T$ (Lemma \ref{LR-Hive-bij}) and 
the bijection between LR tableaux $T$ and initial monomials $in(f_T)$ of basis elements $f_T$ 
(Lemma \ref{initial-m}) give rise to a monoid isomorphism 
between $\mathcal{H}(n)$ and $in(\mathrm{TA}(n))$.
Alternatively, one can use the similar result given in terms of 
LR triangles in \cite[\S2.4]{HJLTW} combined with 
a bijection between hives and LR triangles \cite{PV05}. 
\end{proof}

\subsection{Presentation of $\mathrm{TA}(n)$}

Now,  we investigate a finite presentation of the $\mathrm{GL}(n)$ tensor 
product algebra, thereby giving a way to express highest weight vectors 
with generators of the algebra $\mathrm{TA}(n)$. 

The following notation will be useful to describe our generators of the algebra
$\mathrm{TA}(n)$. It will also make it easier to notice the relation between 
the products of generators and the LR tableaux corresponding to them.

\begin{notation}\label{tab-notation}
The determinant of the $(i+k)\times(i+k)$ submatrix of
\[
\left[
\begin{array}{cccccc}
x_{11} &  \cdots & x_{1n} &    y_{11} & \cdots  & y_{1n} \\
\vdots &  \ddots & \vdots  &  \vdots   & \ddots & \vdots    \\
x_{n1} &  \cdots & x_{nn} &    y_{n1} & \cdots  & y_{nn} 
\end{array} \right]
\]
obtained by choosing rows $1,2,...,i+k$ and columns 
$1,2,...,i, n+j_1, n+j_2, ..., n+j_k$ will be denoted by 
a script-sized Young tableau with a single column having $i$ empty 
boxes followed by boxes with 
entries $j_\ell$'s for $1 \leq j_1 < j_2 < ... < j_k \leq n$.
Then, its initial monomial is $\prod_{a=1}^{i} x_{aa} \prod_{b=1}^{k} y_{b{j_b}}$ 
with respect to a diagonal term order (see \cite{MS05}).
\end{notation}

With this notation, the product of column tableaux is
the product of the corresponding determinants. For example,
\[
{\scriptsize {\young(\ ,p,q) \cdot \young(r,s) \cdot \young(\ ) }}
=
\det \begin{bmatrix}
x_{11} &  y_{1p} &  y_{1q} \\
x_{21} &  y_{2p} &  y_{2q} \\
x_{31} &  y_{3p} &  y_{3q} 
\end{bmatrix}
\times \det 
\begin{bmatrix}
 y_{1r} &  y_{1s} \\
 y_{2r} &  y_{2s}
\end{bmatrix}
\times 
\det \begin{bmatrix} x_{11} \end{bmatrix}
\]
and its initial monomial is $x^2_{11} y_{2p} y_{3p} y_{1r}y_{2s}$.

\subsubsection{Tensor product algebra for $\mathrm{GL}(2)$} 
Now we give a finite presentation of the tensor product algebra for  $\mathrm{GL}(2)$.

\begin{theorem}\label{gl2-thm}
The $\mathrm{GL}(2)$ tensor product algebra $\mathrm{TA}(2)$ is generated by
\begin{equation*}
\Yvcentermath1
g_1 = {\scriptsize {\young(1,2)}},\ \  g_2= {\scriptsize {\young(\ ,1)}}, \ \ g_3={\scriptsize {\young(\ )}},
 \ \ g_4={\scriptsize {\young(\ ,\ )}}, \ \ g_5={\scriptsize {\young(1)}}
\end{equation*}
and these generators are algebraically independent. Therefore, 
\[
\mathrm{TA}(2) \cong \mathbb{C}[z_1, z_2, ..., z_5].
\]
\end{theorem}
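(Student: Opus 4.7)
The plan is to leverage the flat degeneration $\mathrm{TA}(2) \leadsto in(\mathrm{TA}(2))$ and the identification $in(\mathrm{TA}(2)) \cong \mathrm{HA}(2)$ from Proposition \ref{in(TA)-HA-bij}, combined with the explicit Hilbert basis $\{h_1, \ldots, h_5\}$ of $\mathcal{H}(2)$ and the polynomial structure of $\mathrm{HA}(2)$ established in Theorem \ref{hive-alg-gl2}.

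First I would check that each $g_i$ really lies in $\mathrm{TA}(2)$. Each $g_i$ is a determinant of a submatrix of $[x \, | \, y]$ whose row set is a contiguous initial segment $\{1, \ldots, i+k\}$ and whose column set is the initial $\{1, \ldots, i\}$ on the $x$-side together with an initial segment $\{j_1, \ldots, j_k\}$ on the $y$-side. Such a minor is invariant under the left $U_2$-action (which acts by lower-triangular row operations and preserves the span of any initial row block) and is a weight vector under the two right $U_2$-actions (column operations that fix initial-segment minors). This is the standard description of highest weight vectors from the first fundamental theorem as used in \cite{HTW05, HJLTW}.

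Next, using Notation \ref{tab-notation}, I would record the initial monomials
\[
in(g_1) = y_{11}y_{22}, \quad in(g_2) = x_{11}y_{21}, \quad in(g_3) = x_{11}, \quad in(g_4) = x_{11}x_{22}, \quad in(g_5) = y_{11},
\]
and match each $g_i$ with the LR tableau encoded by the hive $h_i$ in Theorem \ref{hive-alg-gl2} via Lemmas \ref{LR-Hive-bij} and \ref{initial-m}. A direct calculation of the boundary triples $(\lambda, \mu, \nu)$ for $h_1, \ldots, h_5$ shows that the corresponding LR tableaux $\young(1,2)$, $\young(\ ,1)$, $\young(\ )$, $\young(\ ,\ )$, $\young(1)$ coincide with the tableaux labeling $g_1, \ldots, g_5$, and that $in(g_i) = in(f_{T_i})$ under the isomorphism of Proposition \ref{in(TA)-HA-bij}.

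Let $S \subseteq \mathrm{TA}(2)$ be the subalgebra generated by $g_1, \ldots, g_5$. Since initial terms multiply under the diagonal term order, $in(S)$ contains $in(g_1), \ldots, in(g_5)$, which generate $in(\mathrm{TA}(2)) \cong \mathrm{HA}(2)$ by Theorem \ref{hive-alg-gl2}(1). Hence $in(S) = in(\mathrm{TA}(2))$, and a graded-dimension comparison in each $(\lambda,\mu,\nu)$-component (the flat deformation preserves multi-graded Hilbert functions) forces $S = \mathrm{TA}(2)$. For algebraic independence, the monomials $in(g_1), \ldots, in(g_5)$ are algebraically independent in $in(\mathrm{TA}(2)) \cong \mathbb{C}[x_1, \ldots, x_5]$; any polynomial relation $p(g_1, \ldots, g_5) = 0$ would, upon extracting the term whose initial monomial is maximal among $\prod_i in(g_i)^{\alpha_i}$, produce a nonzero relation among the $in(g_i)$'s, which is impossible. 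Thus $\mathrm{TA}(2) \cong \mathbb{C}[z_1, \ldots, z_5]$.

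The main obstacle here is conceptual rather than computational: one must be careful that the initial-term SAGBI lifting argument (from generating $in(\mathrm{TA}(2))$ to generating $\mathrm{TA}(2)$) is rigorously justified by the finite SAGBI basis result recalled from \cite{HJLTW}, rather than taken for granted. Once that hook is in place, the rest is a bookkeeping match between Notation \ref{tab-notation} and the Hilbert basis in Theorem \ref{hive-alg-gl2}.
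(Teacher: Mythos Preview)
Your proposal is correct and follows essentially the same approach as the paper: verify $g_i \in \mathrm{TA}(2)$, match $in(g_i)$ with the Hilbert basis elements $h_i$ of $\mathcal{H}(2)$ via Lemmas~\ref{LR-Hive-bij} and~\ref{initial-m} and Proposition~\ref{in(TA)-HA-bij}, and then lift generation from the initial algebra to $\mathrm{TA}(2)$. You are somewhat more explicit than the paper about the SAGBI lifting step and the algebraic-independence argument via initial monomials (the paper simply asserts both), but the underlying strategy is identical.
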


\begin{proof}
We first note that all these determinants $g_i$ are 
invariant under $U_n \times U_n \times U_n$ and therefore they belong to 
the tensor product algebra $\mathrm{TA}(2)$. We also note that they are 
algebraically independent.
Computing their weights under $A_n \times A_n \times A_n$, we see that 
$g_i$ is a highest weight vector of $V_2^{\lambda}$ 
in the decomposition of $V_2^{\mu} \otimes V_2^{\nu}$ where
\begin{center}
\begin{tabular}{ c | c | c | c }
    $i$ & $\lambda$ & $\mu$   & $\nu$    \\ \hline 
    $1$ & $(1,1)$   & $(0,0)$ & $(1,1)$  \\ 
    $2$ & $(1,1)$   & $(1,0)$ & $(1,0)$  \\ 
    $3$ & $(1,0)$   & $(1,0)$ & $(0,0)$  \\
    $4$ & $(1,1)$   & $(1,1)$ & $(0,0)$  \\
    $5$ & $(1,0)$   & $(0,0)$ & $(1,0)$  \\ 
\end{tabular}
\end{center}

Because these tensor products are multiplicity free, our highest weight 
vector of $V_2^{\lambda}$ in the decomposition of 
$V_2^{\mu} \otimes V_2^{\nu}$ is unique up to constant. 
We note that the column tableaux labeling the determinants $g_i$
are LR tableaux for $\mathrm{GL}(2)$.
In fact, one can check that $g_i$ is equal to $f_{T_i}$ given in 
Lemma \ref{initial-m} where $T_i$ is the tableau labeling $g_i$ by 
Notation \ref{tab-notation}.
Moreover, these LR tableaux $T_i$ correspond to 
the Hilbert basis elements $h_i$ listed in the proof of 
Theorem \ref{hive-alg-gl2} by Lemma \ref{LR-Hive-bij}.
Since $h_i$ generate the hive cone $\mathcal{H}(2)$, 
by Proposition \ref{in(TA)-HA-bij}, the monomials $in(g_i)$ generate 
the initial algebra of $\mathrm{TA}(2)$.
Therefore $g_i$ generate the algebra $\mathrm{TA}(2)$.
\end{proof}

We remark that, with the tableau notation of determinants 
(Notation \ref{tab-notation}), 
every LR tableau for $\mathrm{GL}(2)$ can be matched with 
a concatenation of the tableaux for $g_i$.
For example,  the LR tableaux listed in \eqref{LR-tab-gl2} can 
be matched with the concatenation of the tableaux labeling 
$g_i$, or the following products of $g_i$'s:
\begin{align*}
&g_3^3,  \ \ g_3 g_4,  \ \ g_3^2 g_5,  \ \ g_2 g_3,  \ \  g_4 g_5,\\
&g_3 g_5^2,  \ \ g_2 g_5,  \ \  g_1 g_3,  \ \  g_5^3,  \ \  g_1 g_5.
\end{align*}
Here we remark that 
$g_1 g_3 = {\scriptsize {\young(1,2) \cdot \young(\ )}}$ can be rewritten as
\begin{equation}\label{straightening-1}
{\scriptsize {\young(1,2) \cdot \young(\ )}}
 = {\scriptsize {\young(\ ,2) \cdot \young(1) - \young(\ ,1)\cdot\young(2)}},
\end{equation}
by using the polynomial identity 
\begin{equation}\label{straightening-111Q}
\det 
\begin{bmatrix}
 y_{11} &  y_{12} \\
 y_{22} &  y_{22}
\end{bmatrix}
\cdot 
x_{11}
=
\det 
\begin{bmatrix}
 x_{11} &  y_{12} \\
 x_{21} &  y_{22}
\end{bmatrix}
\cdot 
y_{11}
-
\det 
\begin{bmatrix}
 x_{11} &  y_{11} \\
 x_{21} &  y_{21}
\end{bmatrix}
\cdot 
y_{12}.
\end{equation}
Note that in \eqref{straightening-1}, we obtain a LR tableau by 
aligning the columns 
${\scriptsize {\young(\ ,2)}}$ and ${\scriptsize {\young(1)}}$ to the top,
and this LR tableau is the one matched with the product $g_1 g_3$.

\subsubsection{Tensor product algebra for $\mathrm{GL}(3)$}

Next we describe the $\mathrm{GL}(3)$ tensor product algebra 
in terms of generators and relations.

\begin{theorem}\label{gl3-thm}
The $\mathrm{GL}(3)$ tensor product algebra $\mathrm{TA}(3)$ is generated by 
\begin{align*}
& g_1 = {\scriptsize {\young(1)}}, \ \ \  g_2= {\scriptsize {\young(1,2,3)}}, \ \ \  g_3={\scriptsize {\young(1,2)}}, \ \ \  
  g_4= {\scriptsize {\young(\ )}}, \ \ \ g_5={\scriptsize {\young(\ ,1)}}, \\  
& g_6={\scriptsize {\young(\ ,1,2)}}, \ \ \ g_7= {\scriptsize {\young(\ ,\ )}}, \ \ \  g_8={\scriptsize {\young(\ ,\ ,1)}},\ \ \ 
 g_9={\scriptsize {\young(\ ,\ ,\ )}}, \ \ \  g_{10}={\scriptsize {\young(\ ,\ ,2)\cdot 
  \young(1) -\young(\ ,\ ,1)\cdot \young(2)}},
\end{align*}
and there is one basic relation 
$g_1 g_6 g_7 - g_5 g_{10} + g_3 g_4 g_8 = 0$ among them. Therefore, 
\[
\mathrm{TA}(3) \cong \mathbb{C}[z_1, z_2, ..., z_{10}]/ 
\langle z_1 z_6 z_7 - z_5 z_{10} + z_3 z_4 z_8 \rangle.
\]
\end{theorem}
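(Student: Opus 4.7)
The plan is to parallel the proof of Theorem \ref{gl2-thm}, leveraging the SAGBI-style interplay between $\mathrm{TA}(3)$ and its initial algebra established in Proposition \ref{in(TA)-HA-bij}. In view of Theorem \ref{Hive-alg-GL3}, it suffices to exhibit $g_1,\ldots,g_{10}\in\mathrm{TA}(3)$ whose initial monomials satisfy $in(g_i)=\mathbf{z}^{h_i}$ under the isomorphism $in(\mathrm{TA}(3))\cong\mathrm{HA}(3)$, verify the single nontrivial algebraic relation among them, and then compare Hilbert series to exclude further relations.

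First I would verify that each $g_i$ lies in $\mathrm{TA}(3)=\mathbb{C}[M_3\oplus M_3]^{U_3\times U_3\times U_3}$. The elements $g_1,\ldots,g_9$ are top-justified minors of $[X\mid Y]$: the rows used are $\{1,\ldots,i+k\}$ and the columns used are the initial segment $\{1,\ldots,i\}$ of the $X$-block together with the initial segment $\{n+1,\ldots,n+k\}$ of the $Y$-block, so they are classical $U_3\times U_3\times U_3$-invariants because the left unipotent preserves the row span while both right unipotents preserve the selected column spans. The element $g_{10}$ involves $y_{\cdot 2}$ without the accompanying $y_{\cdot 1}$, so neither summand is individually invariant under the third copy of $U_3$; a short direct calculation on the elementary one-parameter subgroups shows that the two summands acquire identical error terms under any such action, so the prescribed difference is invariant, in the spirit of the Pl\"ucker identity \eqref{straightening-111Q}. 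Next I would compute the weights of the $g_i$ under $A_3\times A_3\times A_3$, obtaining triples $(\lambda_i,\mu_i,\nu_i)$ that coincide with the boundaries of the Hilbert basis hives $h_1,\ldots,h_{10}$ from Theorem \ref{Hive-alg-GL3}. Combining this with Lemma \ref{initial-m} identifies $in(g_i)$ with the prescribed initial monomial of $f_{T_i}$, where $T_i$ is the LR tableau matched to $h_i$ by Lemma \ref{LR-Hive-bij}; for $g_1,\ldots,g_9$ one reads $T_i$ off by concatenating the column tableaux defining $g_i$, and for $g_{10}$ one checks directly that the leading term is $x_{11}x_{22}y_{11}y_{32}$, matching the unique LR tableau on $(2,1,1)/(1,1,0)$ of content $(1,1,0)$. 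Since the $in(g_i)$ then generate $in(\mathrm{TA}(3))$, the family $\{g_1,\ldots,g_{10}\}$ is a SAGBI basis and in particular generates $\mathrm{TA}(3)$ as an algebra.

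For the relation, I would verify $g_1g_6g_7-g_5g_{10}+g_3g_4g_8=0$ by direct polynomial expansion in $\mathbb{C}[M_3\oplus M_3]$, double-checked with Macaulay~2. All three summands are homogeneous of weighted degree $6$ and lie in the common weight space $\mathrm{TA}(3)_{(2,1,0)(2,1,0)}^{(3,2,1)}$ of dimension $c_{(2,1,0)(2,1,0)}^{(3,2,1)}=2$, so a nontrivial linear relation among them is forced; the initial monomials of $g_1g_6g_7$ and $g_5g_{10}$ agree and must cancel, leaving a correction whose leading term matches that of $-g_3g_4g_8$. To see that this is the only basic relation, set $F=z_1z_6z_7-z_5z_{10}+z_3z_4z_8$ and let $\pi\colon\mathbb{C}[z_1,\ldots,z_{10}]\to\mathrm{TA}(3)$ be the surjection sending $z_i$ to $g_i$, so $F\in\ker\pi$. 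Both $F$ and $x_1x_6x_7-x_5x_{10}$ are non-zero-divisors of the same weighted degree $6$ in the polynomial ring, whence $\mathbb{C}[z_1,\ldots,z_{10}]/\langle F\rangle$ and $\mathbb{C}[z_1,\ldots,z_{10}]/\langle x_1x_6x_7-x_5x_{10}\rangle\cong\mathrm{HA}(3)$ share the Hilbert series $\mathfrak{HP}_3(t)$. Combined with the equality of Hilbert series of $\mathrm{TA}(3)$ and $\mathrm{HA}(3)$ coming from the flat SAGBI deformation of \cite[\S2.4--2.5]{HJLTW}, the induced surjection $\mathbb{C}[z_1,\ldots,z_{10}]/\langle F\rangle\twoheadrightarrow\mathrm{TA}(3)$ must be an isomorphism.

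The main obstacle is the explicit handling of $g_{10}$: it must be constructed as the correct $U_3\times U_3\times U_3$-invariant representative of its one-dimensional weight space, and the correction term $g_3g_4g_8$ in the relation is invisible in the initial algebra---so it cannot be predicted directly from Theorem \ref{Hive-alg-GL3} and must be discovered by a direct (computer-assisted) computation.
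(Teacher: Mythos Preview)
Your proposal is correct and follows essentially the same route as the paper: verify $U_3\times U_3\times U_3$-invariance, match weights to the boundaries of the Hilbert basis hives, invoke the SAGBI/initial-algebra correspondence with $\mathrm{HA}(3)$ (Proposition~\ref{in(TA)-HA-bij}) to deduce generation, and then lift the hive relation from Theorem~\ref{Hive-alg-GL3}. Your explicit Hilbert-series comparison to rule out further relations is a welcome piece of rigor that the paper's proof leaves implicit.
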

\begin{proof}
We note that all these polynomials $g_i$ are 
 invariant under the action of $U_n \times U_n \times U_n$, and therefore 
 they belong to the tensor product algebra $\mathrm{TA}(3)$.
Computing their weights under $A_n \times A_n \times A_n$, we see that  
$g_i$ is a highest weight vector of $V_3^{\lambda}$ in the decomposition of 
$V_3^{\mu} \otimes V_3^{\nu}$ where
\begin{center}
\begin{tabular}{ c | c | c | c }
    $i$ & $\lambda$ & $\mu$   & $\nu$    \\ \hline 
    $1$ & $(1,0,0)$   & $(0,0,0)$ & $(1,0,0)$  \\ 
    $2$ & $(1,1,1)$   & $(0,0,0)$ & $(1,1,1)$   \\ 
    $3$ & $(1,1,0)$   & $(0,0,0)$ & $(1,1,0)$  \\ 
    $4$ & $(1,0,0)$   & $(1,0,0)$ & $(0,0,0)$  \\ 
    $5$ & $(1,1,0)$   & $(1,0,0)$ & $(1,0,0)$  \\ 
    $6$ & $(1,1,1)$   & $(1,0,0)$ & $(1,1,0)$  \\ 
    $7$ & $(1,1,0)$   & $(1,1,0)$ & $(0,0,0)$  \\ 
    $8$ & $(1,1,1)$   & $(1,1,0)$ & $(1,0,0)$  \\ 
    $9$ & $(1,1,1)$   & $(1,1,1)$ & $(0,0,0)$  \\ 
    $10$ & $(2,1,1)$   & $(1,1,0)$ & $(1,1,0)$  \\ 
\end{tabular}
\end{center}

In each case, since  $V_3^{\mu}$ or $V_3^{\nu}$ is a fundamental representation 
of $\mathrm{GL}(3)$, the tensor product $V_3^{\mu} \otimes V_3^{\nu}$ is 
multiplicity free. Hence, our highest weight vectors are unique up to constant. 
Let $T_i$ be the column tableau labeling the determinant $g_i$ for $1\leq i \leq 9$ 
by Notation \ref{tab-notation}.
For $g_{10}$, we let $T_{10}$ be the concatenation of two columns in 
the first term of $g_{10}$. Then, all these tableaux $T_i$ are LR tableaux, and 
$g_i$ is indeed equal to $f_{T_i}$ given in Lemma \ref{initial-m}. 
Moreover, these LR tableaux,  by Lemma \ref{LR-Hive-bij}, 
correspond to the Hilbert basis elements $h_i$ listed in the proof of 
Theorem \ref{Hive-alg-GL3}.
Since these $h_i$ generate the hive cone 
$\mathcal{H}(3)$, by Proposition \ref{in(TA)-HA-bij}, the monomials $in(g_i)$ generate 
the initial algebra of $\mathrm{TA}(3)$.
Therefore, the polynomials $g_i$ generate the algebra $\mathrm{TA}(3)$.

Now, for the basic relation between these generators, 
by lifting the relation between hives obtained in Theorem \ref{Hive-alg-GL3}, 
we can find the polynomial identity in the statement.
\end{proof}

We remark that using the above result   
every LR tableau for $\mathrm{GL}(3)$ can be matched with a concatenation of the LR tableaux 
$T_i$ associated with $g_i$, or simply the products of $g_i$'s. For example,  the LR tableaux for 
$\mathrm{GL}(3)$ listed in \eqref{LR-tab-gl3} 
can be matched with
\begin{align*}
&g_4^3,  \ \ g_4 g_7,  \ \ g_9,  \ \ g_1 g_4^2,  \ \  g_4 g_5, \ \ g_1 g_7, \ \ g_8,\\
&g_1^2 g_4,  \ \ g_1 g_5,  \ \  g_3 g_4,  \ \  g_6, \ \ g_1^3, \ \ g_1 g_3, \ \ g_2.
\end{align*}
Here, as in \eqref{straightening-1}, the product $g_3 g_4 = {\scriptsize {\young(1,2) 
\cdot \young(\ )}}$ is equal to 
${\scriptsize {\young(\ ,2) \cdot \young(1) - \young(\ ,1)\cdot\young(2)}}$, 
and we see that the first term in the new expression indeed matches with the LR tableau $T_1$ below  
once the columns ${\scriptsize {\young(\ ,2)}}$ and ${\scriptsize {\young(1)}}$ are top-aligned.
\[
T_1 = \young(\ 1,2) \quad \text{and} \quad T_2 = \young(\ \ ,\ 1,2)
\] 
Similarly, the product $g_6 g_7 = {\scriptsize {\young(\ ,1,2) \cdot \young(\ ,\ )}}$  is equal to 
${\scriptsize {\young(\ ,\ ,2)\cdot \young(\ ,1) - \young(\ ,\ ,1)\cdot \young(\ ,2)}}$ 
by the corresponding polynomial identity, and 
therefore $g_6 g_7$ can be matched with the LR tableau $T_2$ 
when the columns ${\scriptsize {\young(\ ,\ ,2)}}$ and ${\scriptsize {\young(\ ,1)}}$ 
in the new expression of  $g_6 g_7$ are top-aligned.

\subsubsection{Example} \label{ex33}

For $\mathrm{GL}(3)$, if $\mu=\nu=(2,1,0)$ and $\lambda=(3,2,1)$, 
then there are exactly two copies of $V^{\lambda}_3$  
occurring in the decomposition of the tensor product 
$V^{\mu}_3 \otimes  V^{\nu}_3$. 
That is, the LR coefficient $c^{\lambda}_{\mu \nu}$ is $2$.

This multiplicity can be computed by counting 
all the LR tableaux on 
skew diagram $\lambda/\mu$ with content $\nu$. They are
\begin{equation*}
T =\young(\ \ 1,\ 2,1) \text{\ \ and \ \ }
T' =\young(\ \ 1,\ 1,2).
\end{equation*}
Alternatively, we can count 
all the hives with boundary $(\lambda, \mu, \nu)$.  
\begin{equation*}
h=\left[
\begin{array}{ccccccc}
   &    &    & 0      &    &   &         \\
   &    & 2  &        & 3  &   &        \\
   & 3  &    & 4 &    & 5 &        \\
 3 &    & 5  &        & 6  &   & 6 
\end{array}
\right]  \text{\ \ and \ \ }
h'=
\left[
\begin{array}{ccccccc}
   &    &    & 0      &    &   &         \\
   &    & 2  &        & 3  &   &        \\
   & 3  &    & 5 &    & 5 &        \\
 3 &    & 5  &        & 6  &   & 6 
\end{array}
\right].
\end{equation*}

To find explicit highest weight vectors of the isomorphic copies of 
$V^{\lambda}_3$ corresponding to $h$ and $h'$ (or $T$ and $T'$) using our
generators $g_1,...,g_{10}$ of $\mathrm{TA}(3)$, we first express the hives 
$h$ and $h'$ with our Hilbert basis of the cone $\mathcal{H}(3)$
\begin{equation*}
h=h_3 + h_4 + h_8  \text{\ \ and \ \ } h'=h_1 + h_6 + h_7,
\end{equation*}
which implies 
\begin{equation*}
\mathbf{z}^h=\mathbf{z}^{h_3} \cdot \mathbf{z}^{h_4} \cdot \mathbf{z}^{h_8}
\text{\ \ and \ \ }
\mathbf{z}^{h'}=\mathbf{z}^{h_1} \cdot \mathbf{z}^{h_6} \cdot \mathbf{z}^{h_7}
\end{equation*}
in the hive algebra $\mathrm{HA}(3) \cong in(\mathrm{TA}(n))$. 
By lifting these expressions to the tensor product algebra $\mathrm{TA}(4)$, 
we  obtain the highest weight vectors corresponding 
to $h$ and $h'$ respectively.
\begin{align} \label{hwv-f1}
f & = g_3 \cdot g_4 \cdot g_8
 = {\scriptsize {\young(1,2) \cdot \young(\ ) \cdot \young(\ ,\ ,1)}}  \\
 &= det\left[
\begin{array}{ccccc}
 y_{11} & y_{12} \\
 y_{21} & y_{22}
\end{array} \right]
\times
x_{11}
\times
det\left[
\begin{array}{ccccc}
x_{11} &  x_{12} & y_{11}\\
x_{21} &  x_{22} & y_{21} \\
x_{31} &  x_{32} & y_{31}
\end{array} \right]  \notag\\
&  = {\scriptsize {\young(\ ,\ ,1)\cdot \young(\ ,2) \cdot \young(1) - \young(\ ,\ ,1) 
 \cdot \young(\ ,1) \cdot \young(2)}}. \notag
\end{align}
\begin{align}\label{hwv-f2} 
f' &= g_1 \cdot g_6 \cdot g_7 = 
{\scriptsize {\young(1) \cdot \young(\ ,1,2) \cdot \young(\ ,\ )}} \\
  &= y_{11} \times
det\left[
\begin{array}{ccccc}
x_{11} &  y_{11} & y_{12} \\
x_{21} &  y_{21} & y_{22} \\
x_{31} &  x_{31} & y_{32} 
\end{array} \right]
\times
det\left[
\begin{array}{ccccc}
x_{11} &  x_{12} \\
x_{21} &  x_{22} 
\end{array} \right]  \notag \\
& = {\scriptsize {\young(\ ,\ ,2)\cdot \young(\ ,1) \cdot \young(1) - \young(\ ,\ ,1) 
 \cdot \young(\ ,2) \cdot \young(1)}}. \notag
\end{align}

We note that these $f$ and $f'$ are indeed elements in the $(\lambda,\mu,\nu)$-homogeneous 
component $\mathrm{TA}(3)_{\mu\nu}^{\lambda}$. 
With the tableau notation for determinants, 
by top-aligning the columns  ${\scriptsize {\young(\ ,\ ,1)}}$, 
${\scriptsize {\young(\ ,2)}}$, and ${\scriptsize {\young(1)}}$ 
in the first term of the last expression of $f$ in \eqref{hwv-f1},
we can match $f$ with the LR tableau $T$.
Also, by top-aligning the columns ${\scriptsize {\young(\ ,\ ,2)}}$, 
${\scriptsize {\young(\ ,1)}}$, 
and ${\scriptsize {\young(1)}}$ in the first term of 
the last expression of $f'$ in \eqref{hwv-f2}, 
we can match $f'$ with the LR tableau $T'$. 
Note that one can check that $f$ and $f'$ have distinct initial monomials 
\begin{equation*}
in(f)=x_{11}^2 x_{22} y_{11} y_{22} y_{31}\text{\ \ and\ \ } 
in(f')= x_{11}^2 x_{22} y_{11} y_{21} y_{32} 
\end{equation*} 
with respect to the monomial order given in \cite{HTW05}, and 
these initial monomials can be
matched with the LR tableaux $T$ and $T'$ as in Lemma \ref{initial-m}.
 
Finally, we remark that the highest weight vectors 
corresponding to $T$ and $T'$ by the formula in \cite{HTW05} 
are different from our highest weight vectors $f$ and $f'$. 
In fact, they are $f$ and $-(f + f')$. 
See the computations in  \cite[\S 8.6]{Le13}.

\subsubsection{Tensor product algebra for $\mathrm{GL}(4)$}

Next we investigate the $\mathrm{GL}(4)$ tensor product algebra.

\begin{theorem}\label{gl4-thm}
The $\mathrm{GL}(4)$ tensor product algebra $\mathrm{TA}(4)$ is generated by 
the following twenty polynomials 
\begin{align*}
& g_1 = {\scriptsize {\young(1)}}, \ \ \   g_2={\scriptsize {\young(1,2)}}, \ \ \   g_3={\scriptsize {\young(1,2,3)}}, 
\ \ \   g_4= {\scriptsize {\young(1,2,3,4)}}, \ \ \   g_5= {\scriptsize {\young(\ )}}, \ \ \  
 g_6= {\scriptsize {\young(\ ,1)}}, \ \ \   g_{7}={\scriptsize {\young(\ ,1,2)}}, \\
 & v_{8}={\scriptsize {\young(\ ,1,2,3)}}, 
\ \ \   g_9= {\scriptsize {\young(\ ,\ )}}, \ \ \  g_{10}={\scriptsize {\young(\ ,\ ,1)}}, \ \ \  
 g_{11}={\scriptsize {\young(\ ,\ ,1,2)}}, \ \ \   g_{12}= {\scriptsize {\young(\ ,\ ,\ )}}, 
\ \ \  g_{13}={\scriptsize {\young(\ ,\ ,\ ,1)}}, \ \ \  g_{14}={\scriptsize {\young(\ ,\ ,\ ,\ )}}, \\  
 & g_{15}= {\scriptsize {\young(\ ,\ ,2) \cdot \young(1) -  \young(\ ,\ ,1) \cdot \young(2)}},
\ \ \   
 g_{16}= {\scriptsize {\young(\ ,\ ,2,3)\cdot \young(1) - \young(\ ,\ ,1,3) \cdot \young(2) 
 + \young(\ ,\ ,1,2) \cdot \young(3)}}, \ \ \    
  g_{17}= {\scriptsize {\young(\ ,\ ,1,3) \cdot \young(1,2) - \young(\ ,\ ,1,2) \cdot 
 \young(1,3)}}, \\
& g_{18}= {\scriptsize {\young(\ ,\ ,\ ,2) \cdot \young(1) - \young(\ ,\ ,\ ,1) \cdot \young(2)}},  \ \ \  
 g_{19}= {\scriptsize {\young(\ ,\ ,\ ,3) \cdot \young(1,2)  - \young(\ ,\ ,\ ,2) 
 \cdot \young(1,3) +  \young(\ ,\ ,\ ,1) \cdot \young(2,3)}}, \ \ \  
 g_{20}= {\scriptsize {\young(\ ,\ ,\ ,2) \cdot \young(\ ,1) - \young(\ ,\ ,\ ,1) 
 \cdot \young(\ ,2)}}
\end{align*}
with the fifteen basic relations $\hat{r}_j=0$ among them where 
\begin{align*}
&\hat{r}_1 = g_1 g_7 g_9 - g_6 g_{15} + g_2 g_5 g_{10},
&\hat{r}_2 = g_1 g_8 g_9 - g_6 g_{16}  + g_5 g_{17}, \\
&\hat{r}_3 = g_1 g_{11} g_{12} - g_{10} g_{18} + g_{13} g_{15}, 
&\hat{r}_4 = g_6 g_{11} g_{12} - g_{10} g_{20} + g_7 g_9 g_{13}, \\
&\hat{r}_5 = g_2 g_8 g_{10} - g_7 g_{17} + g_3 g_6 g_{11},  
&\hat{r}_6 = g_2 g_8 g_{12} - g_7 g_{19} + g_3 g_{20}, \\
&\hat{r}_7 = g_6 g_{18} - g_1 g_{20} + g_2 g_5 g_{13}, 
&\hat{r}_8 = g_7 g_{16} - g_8 g_{15} - g_3 g_5 g_{11}, \\
&\hat{r}_9 = g_{10} g_{19} - g_{12} g_{17} - g_3 g_9 g_{13}, 
&\hat{r}_{10} = g_{15} g_{17} - g_{2} g_{10} g_{16} - g_1 g_3 g_9 g_{11}, \\
&\hat{r}_{11} = g_{15} g_{19} - g_{2} g_{12} g_{16} - g_3 g_9 g_{18}, 
&\hat{r}_{12} = g_{15} g_{20} - g_{7} g_{9} g_{18} - g_2 g_5 g_{11} g_{12}, \\
&\hat{r}_{13} = g_{16} g_{20} - g_{5} g_{11} g_{19} - g_8 g_{9} g_{18}, 
&\hat{r}_{14} = g_{17} g_{20} - g_{6} g_{11} g_{19} - g_2 g_8 g_{9} g_{13},\\
&\hat{r}_{15} = g_{17} g_{18} - g_{1} g_{11} g_{19} - g_{2} g_{13} g_{16}. &
\end{align*}
\end{theorem}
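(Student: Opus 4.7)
The plan is to follow the strategy of the proofs of Theorems \ref{gl2-thm} and \ref{gl3-thm}, now adapted to the richer combinatorics of $\mathrm{GL}(4)$, relying crucially on the SAGBI deformation in Lemma \ref{initial-m} together with the isomorphism $in(\mathrm{TA}(4)) \cong \mathrm{HA}(4)$ of Proposition \ref{in(TA)-HA-bij}. The twenty polynomials $g_i$ and fifteen relations $\hat{r}_j$ have been arranged so that, under the diagonal term order of \cite{MS05} and Notation \ref{tab-notation}, their leading parts reproduce exactly the Hilbert basis $h_i$ of $\mathcal{H}(4)$ and the defining binomial relations $r_j$ of $\mathrm{HA}(4)$ listed in Theorem \ref{Hive-alg-4}.

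First I would verify that each $g_i$ lies in $\mathrm{TA}(4)$ and identify it as a highest weight vector in $\mathrm{TA}(4)^{\lambda}_{\mu\nu}$ for the boundary triple $(\lambda,\mu,\nu)$ of the corresponding Hilbert basis element $h_i$. For the fourteen single-determinant generators $g_1, \ldots, g_{14}$ this reduces to standard complementary-minor and weight computations. For the six corrected generators $g_{15}, \ldots, g_{20}$, modeled on the straightening identity \eqref{straightening-111Q}, $U_4 \times U_4 \times U_4$-invariance follows from appropriate Pl\"ucker relations and the $A_4 \times A_4 \times A_4$-weight is read off term-by-term. With the diagonal term order, the leading monomial of the first summand of each $g_i$ equals $\mathbf{z}^{h_i}$ and the remaining summands are strictly smaller, hence $in(g_i) = \mathbf{z}^{h_i}$. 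Since the $h_i$ form the Hilbert basis of $\mathcal{H}(4)$, Lemma \ref{initial-m} and Proposition \ref{in(TA)-HA-bij} imply that the $in(g_i)$ generate $in(\mathrm{TA}(4))$, and the flat deformation property then yields that the $g_i$ themselves generate $\mathrm{TA}(4)$.

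To produce the relations, for each monoid identity $r'_j$ from the proof of Theorem \ref{Hive-alg-4} the two associated products of generators share the same leading monomial under the isomorphism $in(\mathrm{TA}(4)) \cong \mathrm{HA}(4)$, so their difference has strictly smaller leading term and lies in a single weight space $\mathrm{TA}(4)^{\lambda}_{\mu\nu}$. Expanding this difference in the basis $\{f_T\}$ of Lemma \ref{initial-m} supplies the correction terms displayed in $\hat{r}_j$, exactly as in the derivation of $g_1 g_6 g_7 - g_5 g_{10} + g_3 g_4 g_8 = 0$ in Theorem \ref{gl3-thm}, and each resulting identity is verified by a direct Pl\"ucker/straightening manipulation on the columns of the composite matrix $[x \mid y]$. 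For completeness, consider the surjection
\[
\Phi : \mathbb{C}[z_1, \ldots, z_{20}] / \langle \hat{r}_1, \ldots, \hat{r}_{15} \rangle \twoheadrightarrow \mathrm{TA}(4), \qquad z_i \mapsto g_i.
\]
Because the leading ideal of $\langle \hat{r}_1, \ldots, \hat{r}_{15} \rangle$ contains the binomial ideal $\langle r_1, \ldots, r_{15} \rangle$ that cuts out $\mathrm{HA}(4)$ in Theorem \ref{Hive-alg-4}(1), the domain of $\Phi$ has Hilbert series bounded above by $\mathfrak{HP}_4(t)$. Since $\mathrm{TA}(4)$ has Hilbert series $\mathfrak{HP}_4(t)$ by the flat deformation and Theorem \ref{Hive-alg-4}(2), surjectivity forces $\Phi$ to be an isomorphism.

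The hard part will be the explicit bookkeeping for the fifteen corrected relations: each $\hat{r}_j$ involves products of several column determinants on overlapping blocks of $[x \mid y]$, and pinning down the correct signs and correction terms via Pl\"ucker identities is delicate. A secondary difficulty is that for Hilbert basis elements $h_i$ whose boundary $(\lambda,\mu,\nu)$ yields $c^{\lambda}_{\mu\nu} > 1$, the generator $g_i$ must be chosen carefully within the multidimensional weight space $\mathrm{TA}(4)^{\lambda}_{\mu\nu}$ so that its initial monomial is precisely $\mathbf{z}^{h_i}$ rather than that of some competing basis vector $f_T$.
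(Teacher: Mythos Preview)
Your proposal is correct and follows essentially the same route as the paper: verify $U_4^3$-invariance and weights of the $g_i$, match their initial monomials to the Hilbert basis $h_i$ of $\mathcal{H}(4)$ via Proposition~\ref{in(TA)-HA-bij}, conclude generation from the SAGBI/flat-degeneration argument, and lift the fifteen binomial relations $r'_j$ of Theorem~\ref{Hive-alg-4} to the $\hat r_j$. One simplification the paper exploits that dissolves your ``secondary difficulty'': for every $h_i$ in the list, either $\mu$ or $\nu$ is a fundamental weight of $\mathrm{GL}(4)$, so each relevant tensor product $V_4^{\mu}\otimes V_4^{\nu}$ is multiplicity-free and the highest weight vector $g_i$ is already pinned down up to scalar---no careful selection within a multidimensional weight space is needed.
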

We remark that, as in the previous cases, with our tableau notation of 
determinants, the first terms of our generators can be, once their columns 
are top-aligned, identified with LR tableaux.
We first note that all these polynomials $g_i$ are 
invariant under the action of $U_n \times U_n \times U_n$, and therefore 
belong to the tensor product algebra $\mathrm{TA}(4)$.
Computing their weights under $A_n \times A_n \times A_n$, we see that 
they are indeed highest weight vectors appearing in the decomposition of 
tensor products: $g_i$ is a highest weight vector of $V_4^{\lambda}$ 
in the decomposition of $V_4^{\mu} \otimes V_4^{\nu}$ where
\begin{center}
\begin{tabular}{ c | c | c | c }
    $i$ & $\lambda$ & $\mu$   & $\nu$    \\ \hline 
    $1$ & $(1,0,0,0)$   & $(0,0,0,0)$ & $(1,0,0,0)$  \\ 
    $2$ & $(1,1,0,0)$   & $(0,0,0,0)$ & $(1,1,0,0)$  \\ 
    $3$ & $(1,1,1,0)$   & $(0,0,0,0)$ & $(1,1,1,0)$  \\ 
    $4$ & $(1,1,1,1)$   & $(0,0,0,0)$ & $(1,1,1,1)$  \\ 
    $5$ & $(1,0,0,0)$   & $(1,0,0,0)$ & $(0,0,0,0)$  \\ 
    $6$ & $(1,1,0,0)$   & $(1,0,0,0)$ & $(1,0,0,0)$  \\ 
    $7$ & $(1,1,1,0)$   & $(1,0,0,0)$ & $(1,1,0,0)$  \\ 
    $8$ & $(1,1,1,1)$   & $(1,0,0,0)$ & $(1,1,1,0)$  \\ 
    $9$ & $(1,1,0,0)$   & $(1,1,0,0)$ & $(0,0,0,0)$  \\ 
    $10$ & $(1,1,1,0)$   & $(1,1,0,0)$ & $(1,0,0,0)$  \\ 
    $11$ & $(1,1,1,1)$   & $(1,1,0,0)$ & $(1,1,0,0)$  \\ 
    $12$ & $(1,1,1,0)$   & $(1,1,1,0)$ & $(0,0,0,0)$  \\ 
    $13$ & $(1,1,1,1)$   & $(1,1,1,0)$ & $(1,0,0,0)$  \\ 
    $14$ & $(1,1,1,1)$   & $(1,1,1,1)$ & $(0,0,0,0)$  \\ 
    $15$ & $(2,1,1,0)$   & $(1,1,0,0)$ & $(1,1,0,0)$  \\ 
    $16$ & $(2,1,1,1)$   & $(1,1,0,0)$ & $(1,1,1,0)$  \\ 
    $17$ & $(2,2,1,1)$   & $(1,1,0,0)$ & $(2,1,1,0)$  \\ 
    $18$ & $(2,1,1,1)$   & $(1,1,1,0)$ & $(1,1,0,0)$  \\ 
    $19$ & $(2,2,1,1)$   & $(1,1,1,0)$ & $(1,1,1,0)$  \\ 
    $20$ & $(2,2,1,1)$   & $(2,1,1,0)$ & $(1,1,0,0)$  \\ 
\end{tabular}
\end{center}

We note that $V_4^{\mu}$ or $V_4^{\nu}$ is a fundamental representation of 
$\mathrm{GL}(4)$, and therefore $V_4^{\mu} \otimes V_4^{\nu}$ is multiplicity-free.
Therefore, the highest weight vector of $V_4^{\lambda}$ in the decomposition of 
$V_4^{\mu} \otimes V_4^{\nu}$ is unique. Now with the same arguments we used 
for $\mathrm{GL}(2)$ and $\mathrm{GL}(3)$, by lifting the computations done for $\mathcal{H}(4) \cong
in(\mathrm{TA}(4))$ to $\mathrm{TA}(4)$, we see that $in(g_i)$ generate the initial algebra of
$\mathrm{TA}(4)$, and therefore $g_i$ generate $\mathrm{TA}(4)$.
The relations between $g_j$ can be also obtained easily by lifting 
the relations of hives computed in Theorem \ref{Hive-alg-4}.

\subsection{Remarks on the generators and relations}
The algebra of invariants of the standard maximal unipotent subgroup 
$U_p$ of $\mathrm{GL}(p)$ under left  multiplication on the space $M_{p,q}$ of 
$p \times q$ matrices is generated by certain minors over $M_{p,q}$. 
This is a well-known result from classical invariant theory. 
See, for example, \cite[\S 9]{Fu97} and \cite[\S 14]{MS05}. 
Hence it is not so surprising that the elements of the $\mathrm{GL}(n)$ 
tensor product algebra  $\mathrm{TA}(n)$ 
can be expressed in terms of certain minors over $M_{n, 2n} \cong M_n \oplus M_n$ and 
their relations can be realized in terms of classical determinantal 
identities such as the quadratic relations of Sylvester type given in \cite[\S 8]{Fu97}. 

Here we give some interesting examples of such identities. Recall 
that the Lewis Carroll identity \cite{Dod}, 
also known as the Desnanot-Jacobi identity, for a $n \times n$ matrix $A$ is 
\[
\det A \det A' \; = \; \det A_{n}^{n} \det A_{1}^{1} - \det A_{n}^{1} \det A_{1}^{n}
\]
where $A'$ is the $(n-2) \times (n-2)$ submatrix of $A$ obtained by erasing the first 
and last rows of $A$ and the first and last columns of $A$; $A_p^q$ is 
the $(n-1)\times (n-1)$ submatrix of $A$ obtained by erasing the $p$th row 
and $q$th column of $A$. Then, 
the relations $\hat{r}_1=0$, $\hat{r}_3=0$, and $\hat{r}_5=0$ for $\mathrm{TA}(4)$  
in Theorem \ref{gl4-thm} are essentially the Lewis Carroll identity applied to 
the following matrices 
\[
\begin{bmatrix}
0 & 0 & 1 & 0 \\
y_{11} & x_{11} & x_{12} & y_{12} \\
y_{21} & x_{21} & x_{22} & y_{22} \\
y_{31} & x_{31} & x_{32} & y_{32}

\end{bmatrix},  \;
\begin{bmatrix}
0 & 0 & 0 & 1 & 0 \\
y_{11} & x_{11} & x_{12} & x_{13} & y_{12} \\
y_{21} & x_{21} & x_{22} & x_{23} & y_{22} \\
y_{31} & x_{31} & x_{32} & x_{33} & y_{32} \\
y_{41} & x_{41} & x_{42} & x_{43} & y_{42} 

\end{bmatrix}, \;  \text{and}  \; 
\begin{bmatrix}
0 & 0 & 0 & 1 & 0 \\
y_{12} & x_{11} & y_{11} & x_{12} & y_{13} \\
y_{22} & x_{21} & y_{21} & x_{22} & y_{23}  \\
y_{32} & x_{31} & y_{31} & x_{32} & y_{33} \\
y_{42} & x_{41} & y_{41} & x_{42} & y_{43} 
\end{bmatrix}
\]
respectively. We refer the interested reader to \cite[\S 3]{K18} 
and \cite[\S 5]{KY17} for similar observations.

\bigskip

\section{Appendix: Normaliz  codes for $\mathcal{H}(4)$}\label{appendix}

In this section, we provide the input codes for jNormaliz (a Java-based 
graphical interface for Normaliz) we used to analyze the hive cone 
$\mathcal{H}(4)$ together with their outputs. For the interpretation of 
these outputs other than the ones mentioned in Theorem \ref{Hive-alg-4}, 
we refer the readers to Normaliz documents \cite{BIRS}.

\subsection{Hilbert basis} 
We first define the hive cone $\mathcal{H}(4)$ using inequalities.

\begin{lstlisting}
30
15
-1 1 0 0 0 0 0 0 0 0 0 0 0 0 0  /*the boundary is nonnegative dominant*/
0 -1 0 1 0 0 0 0 0 0 0 0 0 0 0
0 0 0 -1 0 0 1 0 0 0 0 0 0 0 0
0 0 0 0 0 0 -1 0 0 0 1 0 0 0 0
0 0 0 0 0 0 0 0 0 0 -1 1 0 0 0
0 0 0 0 0 0 0 0 0 0 0 -1 1 0 0
0 0 0 0 0 0 0 0 0 0 0 0 -1 1 0
0 0 0 0 0 0 0 0 0 0 0 0 0 -1 1
-1 0 1 0 0 0 0 0 0 0 0 0 0 0 0
0 0 -1 0 0 1 0 0 0 0 0 0 0 0 0 
0 0 0 0 0 -1 0 0 0 1 0 0 0 0 0
0 0 0 0 0 0 0 0 0 -1 0 0 0 0 1
-1 1 1 0 -1 0 0 0 0 0 0 0 0 0 0  /* the rhombus conditions */
0 -1 0 1 1 0 0 -1 0 0 0 0 0 0 0
0 0 -1 0 1 1 0 0 -1 0 0 0 0 0 0
0 0 0 -1 0 0 1 1 0 0 0 -1 0 0 0
0 0 0 0 -1 0 0 1 1 0 0 0 -1 0 0
0 0 0 0 0 -1 0 0 1 1 0 0 0 -1 0
0 -1 1 0 1 -1 0 0 0 0 0 0 0 0 0
0 0 0 -1 1 0 0 1 -1 0 0 0 0 0 0
0 0 0 0 -1 1 0 0 1 -1 0 0 0 0 0
0 0 0 0 0 0 -1 1 0 0 0 1 -1 0 0 
0 0 0 0 0 0 0 -1 1 0 0 0 1 -1 0 
0 0 0 0 0 0 0 0 -1 1 0 0 0 1 -1 
0 1 -1 -1 1 0 0 0 0 0 0 0 0 0 0
0 0 0 1 -1 0 -1 1 0 0 0 0 0 0 0
0 0 0 0 1 -1 0 -1 1 0 0 0 0 0 0
0 0 0 0 0 0 1 -1 0 0 -1 1 0 0 0
0 0 0 0 0 0 0 1 -1 0 0 -1 1 0 0 
0 0 0 0 0 0 0 0 1 -1 0 0 -1 1 0
inequalities

1
15
1 0 0 0 0 0 0 0 0 0 0 0 0 0 0    /* h_11 = 0 */
equations
\end{lstlisting}

\smallskip

Its output gives a Hilbert basis for $\mathcal{H}(4)$ among others.

\begin{lstlisting}
20 Hilbert basis elements
20 extreme rays
20 support hyperplanes

embedding dimension = 15
rank = 14
external index = 1

size of partial triangulation   = 0
resulting sum of |det|s = 0

No implicit grading found

rank of class group = 6
class group is free

***********************************************************************

20 Hilbert basis elements:
 0 0 1 0 1 1 0 1 1 1 0 1 1 1 1
 0 0 1 0 1 2 0 1 2 2 0 1 2 2 2
 0 0 1 0 1 2 0 1 2 3 0 1 2 3 3
 0 0 1 0 1 2 0 1 2 3 0 1 2 3 4
 0 1 1 1 1 1 1 1 1 1 1 1 1 1 1
 0 1 1 1 2 2 1 2 2 2 1 2 2 2 2
 0 1 1 1 2 2 1 2 3 3 1 2 3 3 3
 0 1 1 1 2 2 1 2 3 3 1 2 3 4 4
 0 1 1 2 2 2 2 2 2 2 2 2 2 2 2
 0 1 1 2 2 2 2 3 3 3 2 3 3 3 3
 0 1 1 2 2 2 2 3 3 3 2 3 4 4 4
 0 1 1 2 2 2 3 3 3 3 3 3 3 3 3
 0 1 1 2 2 2 3 3 3 3 3 4 4 4 4
 0 1 1 2 2 2 3 3 3 3 4 4 4 4 4
 0 1 2 2 3 3 2 3 4 4 2 3 4 4 4
 0 1 2 2 3 3 2 3 4 4 2 3 4 5 5
 0 1 2 2 3 3 3 4 4 4 3 4 5 5 5
 0 1 2 2 3 4 2 4 5 5 2 4 5 6 6
 0 1 2 2 3 4 3 4 5 5 3 4 5 6 6
 0 2 2 3 4 4 4 5 5 5 4 5 6 6 6

20 extreme rays:
 0 0 1 0 1 1 0 1 1 1 0 1 1 1 1
 0 0 1 0 1 2 0 1 2 2 0 1 2 2 2
 0 0 1 0 1 2 0 1 2 3 0 1 2 3 3
 0 0 1 0 1 2 0 1 2 3 0 1 2 3 4
 0 1 1 1 1 1 1 1 1 1 1 1 1 1 1
 0 1 1 1 2 2 1 2 2 2 1 2 2 2 2
 0 1 1 1 2 2 1 2 3 3 1 2 3 3 3
 0 1 1 1 2 2 1 2 3 3 1 2 3 4 4
 0 1 1 2 2 2 2 2 2 2 2 2 2 2 2
 0 1 1 2 2 2 2 3 3 3 2 3 3 3 3
 0 1 1 2 2 2 2 3 3 3 2 3 4 4 4
 0 1 1 2 2 2 3 3 3 3 3 3 3 3 3
 0 1 1 2 2 2 3 3 3 3 3 4 4 4 4
 0 1 1 2 2 2 3 3 3 3 4 4 4 4 4
 0 1 2 2 3 3 2 3 4 4 2 3 4 4 4
 0 1 2 2 3 3 2 3 4 4 2 3 4 5 5
 0 1 2 2 3 3 3 4 4 4 3 4 5 5 5
 0 1 2 2 3 4 2 4 5 5 2 4 5 6 6
 0 1 2 2 3 4 3 4 5 5 3 4 5 6 6
 0 2 2 3 4 4 4 5 5 5 4 5 6 6 6

20 support hyperplanes:
 0 -1  0  1  1  0  0 -1  0  0  0  0  0  0  0
 0 -1  1  0  1 -1  0  0  0  0  0  0  0  0  0
 0  0 -1  0  1  1  0  0 -1  0  0  0  0  0  0
 0  0  0 -1  0  0  1  1  0  0  0 -1  0  0  0
 0  0  0 -1  1  0  0  1 -1  0  0  0  0  0  0
 0  0  0  0 -1  0  0  1  1  0  0  0 -1  0  0
 0  0  0  0 -1  1  0  0  1 -1  0  0  0  0  0
 0  0  0  0  0 -1  0  0  1  1  0  0  0 -1  0
 0  0  0  0  0  0 -1  0  0  0  1  0  0  0  0
 0  0  0  0  0  0 -1  1  0  0  0  1 -1  0  0
 0  0  0  0  0  0  0 -1  1  0  0  0  1 -1  0
 0  0  0  0  0  0  0  0 -1  1  0  0  0  1 -1
 0  0  0  0  0  0  0  0  0  0  0  0  0 -1  1
 0  0  0  0  0  0  0  0  1 -1  0  0 -1  1  0
 0  0  0  0  0  0  0  1 -1  0  0 -1  1  0  0
 0  0  0  0  0  0  1 -1  0  0 -1  1  0  0  0
 0  0  0  0  1 -1  0 -1  1  0  0  0  0  0  0
 0  0  0  1 -1  0 -1  1  0  0  0  0  0  0  0
 0  1 -1 -1  1  0  0  0  0  0  0  0  0  0  0
 0  1  1  0 -1  0  0  0  0  0  0  0  0  0  0
\end{lstlisting}

\subsection{HP series} 
To obtain the HP series of $\mathrm{HA}(4)$ and other properties of 
the hive cone $\mathcal{H}(4)$, we redefine the cone using the above 
Hilbert basis elements and specify their degrees.

\begin{lstlisting}
amb_space 15
cone 20
 0 0 1 0 1 1 0 1 1 1 0 1 1 1 1
 0 0 1 0 1 2 0 1 2 2 0 1 2 2 2
 0 0 1 0 1 2 0 1 2 3 0 1 2 3 3
 0 0 1 0 1 2 0 1 2 3 0 1 2 3 4
 0 1 1 1 1 1 1 1 1 1 1 1 1 1 1
 0 1 1 1 2 2 1 2 2 2 1 2 2 2 2
 0 1 1 1 2 2 1 2 3 3 1 2 3 3 3
 0 1 1 1 2 2 1 2 3 3 1 2 3 4 4
 0 1 1 2 2 2 2 2 2 2 2 2 2 2 2
 0 1 1 2 2 2 2 3 3 3 2 3 3 3 3
 0 1 1 2 2 2 2 3 3 3 2 3 4 4 4
 0 1 1 2 2 2 3 3 3 3 3 3 3 3 3
 0 1 1 2 2 2 3 3 3 3 3 4 4 4 4
 0 1 1 2 2 2 3 3 3 3 4 4 4 4 4
 0 1 2 2 3 3 2 3 4 4 2 3 4 4 4
 0 1 2 2 3 3 2 3 4 4 2 3 4 5 5
 0 1 2 2 3 3 3 4 4 4 3 4 5 5 5
 0 1 2 2 3 4 2 4 5 5 2 4 5 6 6
 0 1 2 2 3 4 3 4 5 5 3 4 5 6 6
 0 2 2 3 4 4 4 5 5 5 4 5 6 6 6

grading
 0 0 0 0 0 0 0 0 0 0 0 0 0 0 1

\end{lstlisting}

\medskip

The following is a part of the output obtained by running the previous input file. 
It gives among others the HP series of the monoid algebra of 
the hive cone $\mathcal{H}(4)$.

\begin{lstlisting}
20 extreme rays
20 support hyperplanes

embedding dimension = 15
rank = 14
external index = 1
internal index = 1

size of triangulation   = 16
resulting sum of |det|s = 16

grading:
0 0 0 0 0 0 0 0 0 0 0 0 0 0 1 

degrees of extreme rays:
1: 2  2: 3  3: 4  4: 6  5: 2  6: 3  

multiplicity = 1/147456

Hilbert series:
1 -2 -2 10 -2 -24 22 32 -54 -18 80 -14 -72 34 44 -18 -25
-18 44 34 -72 -14 80 -18 -54 32 22 -24 -2 10 -2 -2 1 
denominator with 14 factors:
1: 4  2: 6  12: 4  

degree of Hilbert Series as rational function = -32

The numerator of the Hilbert Series is symmetric.

Hilbert series with cyclotomic denominator:
1 0 -1 2 2 0 1 0 2 2 -1 0 1 
cyclotomic denominator:
1: 14  2: 10  3: 4  4: 4  6: 2  

\end{lstlisting}

\medskip


\bigskip

\medskip
\end{document}